\crefname{step}{step}{step}
\crefname{property}{property}{property}
\crefname{condition}{condition}{condition}
\newtheorem{theorem}{Theorem}[section]
\newtheorem{proposition}[theorem]{Proposition}
\newtheorem{lemma}[theorem]{Lemma}
\newtheorem{corollary}[theorem]{Corollary}
\theoremstyle{definition}
\newtheorem{example}[theorem]{Example}
\newtheorem{definition}[theorem]{Definition}
\newtheorem{remark}[theorem]{Remark}
\newcommand{\qc}{,\quad}
\newcommand{\B}{\mathcal{B}}
\newcommand{\D}{D}
\newcommand{\rH}{H}
\newcommand{\Z}{\mathbb{Z}}
\newcommand{\C}{C}
\newcommand{\cK}{\mathcal{K}}
\newcommand{\cS}{\mathcal{S}}
\newcommand{\cH}{\mathcal{H}}
\newcommand{\sM}{\mathsf{M}}
\newcommand{\fA}{\mathfrak{A}}
\newcommand{\fB}{\mathfrak{B}}
\newcommand{\fD}{\mathfrak{D}}
\newcommand{\fH}{\mathfrak{H}}
\newcommand{\fK}{\mathfrak{K}}
\newcommand{\id}{\mathrm{id}}
\newcommand{\oC}{\widebar{C}}
\newcommand{\od}{\widebar{d}}
\newcommand{\bN}{\mathbb{N}}
\newcommand{\vp}{\widecheck{p}}
\newcommand{\ima}{\mathrm{im}}
\newcommand{\tikznode}[2]{\relax
\ifmmode%
  \tikz[remember picture,baseline=(#1.base),inner sep=0pt] \node (#1) {$#2$};
\else
  \tikz[remember picture,baseline=(#1.base),inner sep=0pt] \node (#1) {#2};%
\fi}
\tikzset{box around/.style={
    draw,
    inner sep=2pt,outer sep=0pt,
    node contents={},fit=#1
},      
}
\newcommand{\fibre}[2]{{#1}^{-1} \left(  #2 \right)}
\crefname{equation}{eq.}{eqs.}
\begin{document}
%%%%%%%%%%%%%%%%%%%%%%%%%%%%%%%%%%%%

\title{Computing Connection Matrices of Conley Complexes \\ via Algebraic Morse Theory}
\author{\'Alvaro Torras-Casas}
\email{atorras@us.es}
\address{Departamento de Matem\'atica Aplicada I, Universidad de Sevilla, España}

\author{Ka Man Yim}
\email{yimkm@cardiff.ac.uk}
\address{School of Mathematics, Cardiff University, UK}

\author{Ulrich Pennig}
\email{pennigu@cardiff.ac.uk}
\address{School of Mathematics, Cardiff University, UK}

\keywords{Connection matrix, Morse complex, Conley complex, Conley index, algebraic Morse theory, homological perturbation lemma, clearing optimisation}
\subjclass{37B30, 55N31, 55U15, 57Q70}

\begin{abstract} 
    Given a poset-graded chain complex of vector spaces, a Conley complex is the minimal chain-homotopic reduction of the initial complex that respects the poset grading. A connection matrix is a matrix representing  the differential of the Conley complex.
    In this work, we give an algebraic derivation of the Conley complex and its connection matrix using homological perturbation theory and algebraic Morse theory.
    Under this framework, we use homology decompositions of relative chain complexes to determine the connection matrix, rather than Forman's acyclic partial matching in the usual discrete Morse theory setting. 
    This homology decomposition is obtained by means of the clearing optimisation, a commonly used technique in persistent homology.
    Finally, we show how this algebraic perspective yields an algorithm for computing the connection matrix via column reductions on the differential of the initial complex. 
\end{abstract}

\maketitle

%%%%%%%%%%%%%%%%%%%%%%%%%%%%%%%%%%%%%%%%%%%%%%%%%%%%%%%%%%%%%%%%
\section{Introduction}
%%%%%%%%%%%%%%%%%%%%%%%%%%%%%%%%%%%%%%%%%%%%%%%%%%%%%%%%%%%%%%%%

Morse theory is the study of homological data compression via gradient flows. In classical Morse theory, the singular homology of a compact manifold can be recovered from a \emph{Morse complex}, a chain complex generated by critical points of a Morse function, and whose boundary operator expresses relations between critical points by gradient flow. 
In discrete Morse theory~\cite{Forman1998MorseComplexes}, Forman transplants smooth Morse theory to the setting of finite CW complexes, where the role of the gradient {is enacted by} an \emph{acyclic partial matching} of cells. 
Given an acyclic partial matching, we can construct a smaller chain complex chain homotopic to the cellular chain complex, with chains generated by only unmatched `critical' cells.

Discrete Morse theory can be further abstracted algebraically as a chain homotopic reduction of \emph{poset-graded chain complexes}, whose links with Morse theory emerged from \emph{Conley theory}, an extension of Morse-theoretic constructs to non-gradient dynamical systems~\cite{Franzosa,Robbin1992LyapunovFunctor}. 
Given a poset-graded chain complex $(C,d)$ of vector spaces, its \emph{Conley complex} $(\oC, \od)~$\cite[Def.~4.23]{harker2021}  is the \emph{minimal} chain homotopic reduction that respects the poset grading (see \Cref{rmk:con_index}), satisfying the following conditions  given in \cref{def:connection_matrix}:
\begin{enumerate}
    \item There is a chain homotopy equivalence between $(\oC, \od)$ and $(C, d)$ that is compatible with the poset filtration; and
    \item Restricted within each poset grade, the differential $\od$ is trivial.
\end{enumerate}
A matrix {representation $\Delta$ of the differential} $\od$ is called a \emph{connection matrix}.  
The chain groups of the Conley complex, called Conley indices, consist of relative homology cycles of $C$ supported at each poset grade, generalising the Morse indices associated to critical points (cells) in smooth (discrete) Morse theory. We can consider a Conley complex to be the minimal representative of its equivalence class of complexes in the homotopy category of poset-graded chain complexes.

In this study, we focus on establishing an algebraic derivation of the Conley complex and an associated algorithm for computing the connection matrix {$\Delta$}. %$d^\M$. 
Our main  technique is based on the algebraic Morse theory of Sk\"oldberg  for complexes with fixed bases, or \emph{based complexes}~\cite{Skoldberg2006MorseViewpoint,Skoldberg2018}. Generalising Forman's acyclic matching of cells, Sk\"oldberg introduced the notion of `Morse matching' for based complexes, which reproduces discrete Morse theory on an algebraic level. Using homological perturbation theory introduced by~\cite{Brown1965TheTheorem, Gugenheim1972OnFibration}, we show how a Morse matching fashioned out of \emph{homology decompositions}~(\cref{eq:homology-decomposition}) of relative chain complexes enables us to reduce a poset-graded chain complex to its Conley complex. This derivation, which is our main algebraic contribution, is expressed in the theorem below (we recall a poset is well-founded if there are no infinite descending sequences of elements in $P$, e.g. when $P$ is finite).

\begin{restatable}{theorem}{mainthm} 
      Let $(C, d)$ be a $P$-graded chain complex of vector spaces where $P$ is well-founded.
      A homology decomposition of the relative chains $C^p$ of $(C,d)$ at each poset grade $\zeta^p_n: C^p_n \xrightarrow{\cong} H^p_n \oplus B^p_n \oplus B^p_{n-1}$ induces 
       a $P$-filtered contraction of $(C,d)$ to a Conley complex $(\oC, \od)$ 
    \begin{equation} \label{eq:main_contraction}
    \begin{tikzcd}[ampersand replacement=\&]
    (\oC, \od) \ar[r, "\bar{\beta}", shift right=0.3em, swap] \&
    (C,d) 
    \arrow[l, "\bar{\alpha}", shift right=0.3em, swap]
     \arrow["\bar{\gamma}", from=1-2, to=1-2, loop, in=345, out=15, distance=10mm]
    \end{tikzcd}.
    \end{equation}
where $\oC_n = \bigoplus_{p \in P} H_n^p$, and  $\od = \alpha \delta(1-\gamma\delta)^{-1}\beta$ for $\alpha, \beta, \gamma, \delta$ constructed from $d$ and $\zeta^p$ in \cref{eq:contraction_relative,eq:abc_proxy,eq:delta_graded}.
\label{thm:main}
\end{restatable}
Following the algebraic description of a connection matrix, we show how a homology decomposition of relative chains can be used to construct a four step algorithm for computing the connection matrix, given a matrix representation of $d$ (see \Cref{subsec:algorithm}). Since our algebraic description is based on a choice of splitting, we first use the clearing optimisation routine of~\cite{Chen2011,Bauer2014ClearChunks} to express the differential of the chain complex in a \emph{separating basis}, which effects the homology decomposition. 
After bringing the differential into the separating basis, and after (optionally) eliminating some columns and rows,
we can then perform column reductions on the matrix to produce the connection matrix. The final step then consists of eliminating the redundant columns and rows to isolate the {Conley} indices.
We prove the correctness of our algorithm in~\Cref{prop:connection-matrix-algo}. 

%%%%%%%%%%%%%%%%%%%%%%%%%%%%%%%%%%%%%%%%%%%%%%%%%%%%%%%%%%%%%%%%
\subsection*{Related Work}
%%%%%%%%%%%%%%%%%%%%%%%%%%%%%%%%%%%%%%%%%%%%%%%%%%%%%%%%%%%%%%%%

We recommend~\cite{Mischaikow1995ConleyTheory} for a general introduction to Conley index theory~\cite{Conley1978IsolatedIndex}, and~\cite{Mrozek2025ConnectionDynamics} for a contemporary account of Conley complexes in the context of combinatorial dynamics. The concept of a connection matrix in Conley theory was first introduced in~\cite{Franzosa}. This topic was further explored and applied in works such as~\cite{Franzosa1986IndexDecompositions,Reineck1990TheFlows,Robbin1992LyapunovFunctor,Franzosa1998AlgebraicTheory}. Recently~\cite{harker2021} established categorical equivalences between their version of a connection matrix and those of~\cite{Franzosa} and~\cite{Robbin1992LyapunovFunctor}, and showed that their definition of a connection matrix, which we follow, is unique up to a choice of $P$-filtered basis. In~\cite{Robbin1992LyapunovFunctor,Mrozek2025ConnectionDynamics}, they showed the existence Conley complexes by implicitly constructing it by induction.

We refer the reader to~\cite{Nicolaescu2011AnTheory} for comprehensive guide to smooth Morse theory and its applications and extensions in other areas of pure mathematics, such as~\cite{Witten1982SupersymmetryTheory,Floer1987MorseDiffeomorphisms}. A classic summary of the contributions of Morse theory to other areas of mathematics was written by~\cite{Bott1988MorseIndomitable}. For texts on discrete Morse theory, the reader can consult~\cite{Kozlov2021OrganizedTheory}, or~\cite{Knudson2015MorseDiscrete,Scoville2019DiscreteTheory} which describe both smooth and discrete aspects. Alongside Sk\"oldberg's algebraic generalisation of Morse theory, we also direct the reader to~\cite{Kozlov2005DiscreteComplexes} for a similar approach. We note that the algebraic Morse theory of~\cite{Skoldberg2006MorseViewpoint}
has also been applied to other computational topology problems as well, such as signal compression on simplicial complexes~\cite{Ebli2024MorseComplexes}, and the computation of cellular sheaf cohomology~\cite{Curry2016DiscreteCohomology}. 

Until recently, the focus of computing the connection matrix is in the setting where the initial chain complex is derived from a cellular complex, where Forman's discrete gradients (acyclic partial matchings)~\cite{Forman1998MorseComplexes} can be leveraged in the computation~\cite{Mischaikow2013MorseHomology,Harker2014DiscreteMaps,Harker2021MorseComputation,harker2021}. Algorithms that are purely algebraic, without reliance on any discrete gradients, have been recently proposed by~\cite{Dey2024ComputingReductions,Dey2025ComputingDecomposition}. The computation of an optimal acyclic partial matching that minimises the critical cells is $NP$-hard, equivalent to solving an integer programming problem~\cite{Joswig2004ComputingFunctions}. In \cite{harker2021,Harker2021MorseComputation}, the authors showed that efficient heuristics for computing non-optimal matchings, such as the \texttt{MorseReduce} algorithm~\cite{Mischaikow2013MorseHomology}, can be incorporated in a recursive routine to compute a connection matrix without directly solving the integer programming problem. 

%%%%%%%%%%%%%%%%%%%%%%%%%%%%%%%%%%%%%%%%%%%%%%%%%%%%%%%%%%%%%%%%
\subsection*{Our contribution}
%%%%%%%%%%%%%%%%%%%%%%%%%%%%%%%%%%%%%%%%%%%%%%%%%%%%%%%%%%%%%%%%

In this work, we show that choosing homology decompositions at each poset grade, one obtains the chain contraction from a poset-graded complex to its Conley complex by directly applying algebraic Morse theory from \cite{Skoldberg2018}.
Furthermore, we not only present an algebraic derivation of the Conley complex, but also translate the algebraic derivation into an algorithm for computing the connection matrix, following a line of recent work by~\cite{harker2021,Dey2024ComputingReductions,Mrozek2025ConnectionDynamics}. The development of the algorithm is enabled by the explicit description of the Conley complex in terms of algebraic formulae, a consequence of applying homological perturbation theory to homology decompositions. 
A benefit of our approach is that we obtain an explicit expression for the entries of the connection matrix for a given choice of separating bases, see~\Cref{cor:entry-connection}.

Our algorithm for computing the connection matrix is similar to that of the reduction algorithms of~\cite{Dey2024ComputingReductions,Dey2025ComputingDecomposition}; like theirs, our approach relies essentially on Gaussian elimination to compute the Conley complex in the general algebraic setting, and has complexity $O(N^3)$, where $N$ is the dimension (size of the basis) of the chain complex. 
Our approach differs in that we express the connection matrix explicitly in a relative homology basis, and the homology decomposition is explicitly computed (see~\Cref{alg:clear-reduce}). On the other hand,~\cite{Dey2024ComputingReductions,Dey2025ComputingDecomposition} implicitly infer the homology decomposition of relative chains in their routine, and the connection matrix obtained is $P$-filtered equivalent to the one we obtain in the relative homology basis.

%%%%%%%%%%%%%%%%%%%%%%%%%%%%%%%%%%%%%%%%%%%%%%%%%%%%%%%%%%%%%%%%
\subsection*{Organisation}
%%%%%%%%%%%%%%%%%%%%%%%%%%%%%%%%%%%%%%%%%%%%%%%%%%%%%%%%%%%%%%%%
In \Cref{sec:background}, we introduce the main concepts and theoretical background material for this article. In \Cref{ssec:chain-complex-field}, we set out the relevant concepts relating to chain complexes, namely contractions, homology decompositions, and the homological perturbation lemma (\Cref{lemma:perturbation}). In \Cref{ssec:pgradedcc}, we define poset-graded chain complexes, and the Conley complex. Last but not least, we give an account of Sk\"oldberg's algebraic Morse theory in~\Cref{ssec:AMT}, and introduce based complexes and Morse matchings. 

We prove our main algebraic result, \Cref{thm:main}, in \cref{sec:connectionmatrix}. \Cref{ssec:relhomperturb} gives the proof purely from the perspective of homological perturbation theory. In \Cref{ssec:conley_amt}, we show how we can arrive at the same derivation of the Conley complex by transforming the $P$-graded complex into a based complex. Readers more familiar with discrete Morse theory may find the derivation in~\Cref{ssec:conley_amt} more congenial as it generalises concepts in discrete Morse theory; for those  more focused on homological algebra, the proof in~\Cref{ssec:relhomperturb} is self-contained, and we do not rely on concepts introduced in~\Cref{ssec:AMT,ssec:conley_amt} for the proof or subsequent sections.

Our second main result is an algorithm to compute the connection matrix described in \cref{sec:connection-matrix-algorithm}. 
We start this section by showing that clearing optimisation~\cite{Chen2011, Bauer2014ClearChunks} leads to a homology decomposition of chain complexes over a field as well as to a separating basis associated to it; see~\cref{def:sep_basis}.
The algorithm then follows the steps outlined in \cref{subsec:algorithm} and a pseudocode instantiation is given in~\cref{alg:connection-matrix}.
We conclude this section with the proof of \cref{prop:connection-matrix-algo}, which shows that the result of the algorithm is indeed a connection matrix. 

A conclusion and a discussion on future research directions can be found in \cref{sec:conclusion}.

\section{Background} \label{sec:background}
%%%%%%%%%%%%%%%%%%%%%%%%%%%%%%%%%%%%%%%%%%%%%%%%%%%%%%%%%%%%%%%%%%
%%%%%%%%%%%%%%%%%%%%%%%%%%%%%%%%%%%%%%%%%%%%%%%%%%%%%%%%%%%%%%%%%%

In this section, we introduce the necessary background concepts for the article. We first review standard definitions of chain complex and contractions, and we detail relevant properties of $P$-graded chain complexes. In \Cref{ssec:pgradedcc}, we introduce the Conley complex. At the end, we briefly review algebraic Morse theory from the point of view of the perturbation lemma.

%%%%%%%%%%%%%%%%%%%%%%%%%%%%%%%%%%%%%%%%%%%%%%%%%%%%%%%%%%%%%%%%
\subsection{Chain Complexes over a Field}\label{ssec:chain-complex-field}
%%%%%%%%%%%%%%%%%%%%%%%%%%%%%%%%%%%%%%%%%%%%%%%%%%%%%%%%%%%%%%%%
We consider a \emph{chain complex} $(C, d)$ of vector spaces $C=\{C_n\}_{n \in \Z}$ over some field, together with \emph{differentials} $d$, which are linear maps $d_n\colon C_n \rightarrow C_{n-1}$ satisfying $d_{n-1}d_n=0$ for all $n \in \Z$. 
We always write $C$ to refer to a chain complex $(C, d)$, dropping the explicit mention of the differential. 
Given $r \in \Z$, we write $C[r]$ for the $r$-shifted chain complex given by $C[r]_n = C_{n+r}$ for all $n \in \Z$; the differential is also shifted similarly. 
Given two chain complexes, $(C, d^C)$ and $(D, d^D)$, a map of degree $r\in \Z$, $g\colon C\rightarrow D[r]$, is a collection of linear maps $g_n\colon C_n\rightarrow D_{n+r}$ for all $n \in \Z$. In this context, a map $f\colon C \rightarrow D[r]$ is a \emph{chain map} if the collection of linear maps $f_n\colon C_n \rightarrow D_{n+r}$ is such that $d^D_{n+r}f_n=f_{n-1}d^C_n$ for all $n \in \Z$. A chain complex $D$ is a \emph{subcomplex} of another chain complex $C$ if $D_n\subset C_n$ for all $n \in \Z$ and the differentials of $D$ are restrictions of the ones of $C$.

\subsubsection{Homology Decomposition}
Let $C$ be a chain complex.
The kernel and image of the differential $d$, denoted by $\ker(d)$ and $\ima(d)$ respectively, are subcomplexes of $C$.
Recall $\ker(d_n)$ and $\ima(d_{n+1})$ are called the $n$-cycles and $n$-boundaries of $C_n$, and the \emph{homology} group is given by the quotients 
$\rH_n(C)= \ker(d_n)\big/ \ima(d_{n+1})$. 
For each dimension $n \in \Z$, we can choose to decompose the chain groups as a direct sum
\begin{equation}\label{eq:splitting-homology}
\zeta: C_n \xrightarrow{\cong} H_n \oplus B_n \oplus K_{n}\ ,
\end{equation}
where {we use the notation  $B_n\coloneqq \ima(d_{n+1})$, and  $K_{n}\coloneqq B_{n-1}$. 
The decomposition is induced by considering the following two short exact sequences of vector spaces; the first
\[
\begin{tikzcd}
    0 \ar[r] &
    \ker(d_n) \ar[r, hookrightarrow ] &
    C_n \ar[r, "d_n", twoheadrightarrow] &
     \ima(d_n) \ar[r] &
    0
\end{tikzcd},
\]
allows us to choose a splitting $C_n \cong Z_n \oplus K_{n}$, where 
$Z_n 
\coloneqq \ker(d_n)$.
We have another splitting $Z_n\cong H_n\oplus B_n$ obtained from the exact sequence
$
\begin{tikzcd}
    0 \ar[r] &
    \ima(d_{n+1}) \ar[r, hookrightarrow] &
    \ker(d_n) \ar[r, twoheadrightarrow] &
    \rH_n(C) \ar[r] &
    0
\end{tikzcd}
$.

An arbitrary set of isomorphisms $\{\zeta_n \colon C_n\xrightarrow{\cong} H_n \oplus B_n \oplus K_n\}_{n \in \Z}$ need not be mutually compatible across chain groups of different dimensions. The set of isomorphisms $\{\zeta_n\}$ is said to be a \emph{homology decomposition (of $C$)} (c.f. ~\cite[Prop~3.4.5.]{Mrozek2025ConnectionDynamics}) if the following diagram commutes for all $n$:
    \begin{equation}
    \begin{tikzcd}[column sep=large]
	{C_n} & {H_n \oplus B_n \oplus K_n } & {K_n} \\
	{C_{n-1}} & {H_{n-1} \oplus B_{n-1} \oplus K_{n-1} } &  B_{n-1}
	\arrow["{\zeta_n^{-1}}"',"\cong", from=1-2, to=1-1]
	\arrow["{d_n}"', from=1-1, to=2-1]
	\arrow[hook', from=1-3, to=1-2]
	\arrow["{\zeta_{n-1}}","\cong"',  from=2-1, to=2-2]
	\arrow["=", from=1-3, to=2-3]
    \arrow[two heads,  from=2-2, to=2-3]
\end{tikzcd} \label{eq:homology-decomposition}
\end{equation}
The isomorphisms involved in splitting the short exact sequences do satisfy the consistency requirement. 
The existence of homology decompositions is a standard result which follows from $C$ being a chain complex over a field; see for example the introduction to chapter~1.4. from~\cite{Weibel1994AnAlgebra} or Proposition~3.4.5 from~\cite{Mrozek2025ConnectionDynamics}.
In \Cref{subsec:splittings-clearing}, we study how to compute such decompositions in practice.
\begin{remark}
    Notice that the splitting in \cref{eq:splitting-homology} is \emph{unnatural}.
    This means that, given a chain map $f\colon C \rightarrow D$, the morphism $f_n$ is not the direct sum of three chain maps between the subcomplexes $H, B, K$ of $C$ and $D$. 
\end{remark}

\iffalse
\[
    \begin{tikzcd}[column sep=0.2cm]
        \zeta_n \colon &
        C_n \ar[d, "d_n"] & \cong & 
        \cH_n & \oplus & 
        B_n & \oplus  & 
        K_n \ar[lld, "\cong"]
        \\
        \zeta_{n-1} \colon &
        C_{n-1} & \cong & 
        \cH_{n-1} & \oplus & 
        B_{n-1} & \oplus & 
        K_{n-1}
    \end{tikzcd}
    \]
\fi
\begin{example}
    In this example, we illustrate how a set of isomorphisms  $\{\zeta_n \colon C_n\xrightarrow{\cong} H_n \oplus B_n \oplus K_n\}_{n \in \Z}$ do not necessarily lead to a homology decomposition.     
    Consider the simplicial complex $[0,1]$ giving the unit interval with $0$-simplices $\{0\}$ and $\{1\}$ and $1$-simplex $[0,1]$. Its nontrivial chain groups over $\Z_2$ are $C_1=\langle [0,1]\rangle$ and $C_0=\langle [0], [1]\rangle$, while its nontrivial subcomplexes of homology, boundaries and preboundaries are $B_0=K_1=\langle [0]+[1]\rangle$ and $H_0 = C_0/B_0$. 
    Then, consider a pair of isomorphisms $\zeta_1\colon C_1\rightarrow K_1$, given by the assignment $[0,1]\mapsto [0]+[1]$, and $\zeta_0 \colon C_0\rightarrow H_0 \oplus B_0$, given by the assignments $[0]\mapsto ([0]+B_0, 0)$ and $[1]\mapsto (0, [0]+[1])$. 
    In this case, one can check that $\zeta_0(d_1(\zeta_1^{-1}(K_1)))=([0]+B_0, [0]+[1])$, which is not contained in $B_0$. Hence, $\{\zeta_0, \zeta_1\}$ is not a homology decomposition. 
    On the other hand, changing $\zeta_0$ for $\zeta'_0 \colon C_0\rightarrow H_0 \oplus B_0$, given by the assignment $[0]\mapsto ([0]+B_0, 0)$ and $[0]+[1]\mapsto (0, [0]+[1])$, leads to a homology decomposition $\{\zeta_0', \zeta_1\}$.
\end{example}

%%%%%%%%%%%%%%%%%%%%%%%%%%%%%%%%%%%%%%%%%%%%%%%%%%%%%
\subsubsection{Chain Homotopy and Contractions}
A chain homotopy equivalence between two chain complexes $(C, d)$ and $(C',d')$ consists of chain maps $f,g$, and a collection of maps $h_n: C_n \to C_{n+1}$ and $k_n: C'_n \to C'_{n+1}$, 
\begin{equation}\label{eq:chain_homotopy}
    \begin{tikzcd}[ampersand replacement=\&]
	(C',d') \& (C,d)
	\arrow["k"', from=1-1, to=1-1, loop, in=195, out=165, distance=10mm]
	\arrow["g"', shift right, from=1-1, to=1-2]
	\arrow["f"', shift right, from=1-2, to=1-1]
	\arrow["h"', from=1-2, to=1-2, loop, in=15, out=345, distance=10mm]
    \end{tikzcd},
\end{equation}
such that the following are satisfied: 
\begin{equation}\label{eq:chain_homotopy_b}
fg = 1_{C'} + d'k  + kd',\quad \text{and} \quad 
gf = 1_C + dh + hd.
\end{equation}
Recall that a chain homotopy equivalence induces isomorphisms between the homology groups $C$ and~$C'$. A  \emph{contraction} $(f,g,h)$ is a chain homotopy equivalence  
\begin{equation} \label{eq:contraction}
    \begin{tikzcd}[ampersand replacement=\&]
	(C',d') \& (C,d)
	\arrow["g"', shift right, from=1-1, to=1-2]
	\arrow["f"', shift right, from=1-2, to=1-1]
	\arrow["h"', from=1-2, to=1-2, loop, in=15, out=345, distance=10mm]
\end{tikzcd},
\end{equation}
 where the linear map $k$ in \cref{eq:chain_homotopy_b} is trivial, and $f,g,h$ satisfy the identities
\begin{equation}\label{eq:contraction_b}
fg = 1_{C'},\   
gf = 1_C + dh + hd,\ 
fh = 0,\  
hg = 0,\mbox{ and } 
h^2=0.
\end{equation}

It follows from \cref{eq:contraction_b} that we also have $h + hdh=0$.
We recall a standard example how contractions arise from homology decompositions (see, for example,~\cite[Prop.~3.4.5.]{Mrozek2025ConnectionDynamics} and~\cite[\S 1.4]{Weibel1994AnAlgebra}).
\begin{example} \label{ex:basic_contraction}
A homology decomposition $\zeta_n: C_n \xrightarrow{\cong} H_n \oplus B_n \oplus K_{n}$ induces a contraction 
    \[
    \begin{tikzcd}[ampersand replacement = \&]
	(H(C), 0) \& (C,d)
	\arrow["\beta"', shift right, from=1-1, to=1-2]
	\arrow["\alpha"', shift right, from=1-2, to=1-1]
	\arrow["\gamma"', from=1-2, to=1-2, loop, in=15, out=345, distance=10mm]
\end{tikzcd},
    \]
    where $(a, \beta, \gamma)$ are defined for each dimension by
\[\begin{tikzcd}[ampersand replacement  = \&, column sep=small, row sep = small]
{\alpha_n:\ C_n} \& {H_n \oplus B_n \oplus K_n} \& {H_n} \\
	{\beta_n:\ H_n} \& {H_n \oplus B_n \oplus K_{n}} \& {C_n} \\
	{\gamma_n:\ C_n} \& {H_n \oplus B_n \oplus K_{n}} \& {B_n=:K_{n+1}} \& {H_{n+1} \oplus B_{n+1} \oplus K_{n+1}} \& {C_{n+1}} \& {C_{n+1}}
	\arrow["{\zeta_n}", "\cong"', from=1-1, to=1-2]
	\arrow[two heads, from=1-2, to=1-3]
	\arrow[hook, from=2-1, to=2-2]
	\arrow["{\zeta_n^{-1}}", "\cong"', from=2-2, to=2-3]
	\arrow["{\zeta_n}", "\cong"', from=3-1, to=3-2]
	\arrow[two heads, from=3-2, to=3-3]
	\arrow[hook, from=3-3, to=3-4]
	\arrow["{\zeta_{n+1}^{-1}}", "\cong"',  from=3-4, to=3-5]
    \arrow["-1",  from=3-5, to=3-6]
\end{tikzcd}\]
\iffalse
\[\begin{tikzcd}[ampersand replacement  = \&, column sep=large, row sep = small]
{\alpha_n:\ C_n} \& {H_n \oplus B_n \oplus K_n} \& {H_n} \\
	{\beta_n:\ H_n} \& {H_n \oplus B_n \oplus K_{n}} \& {C_n} \\
	{\gamma_n:\ C_n} \& {H_n \oplus B_n \oplus K_{n}} \& {B_n=:K_{n+1}} \& {H_{n+1} \oplus B_{n+1} \oplus K_{n+1}} \& {C_{n+1}}
	\arrow["{\zeta_n}", "\cong"', from=1-1, to=1-2]
	\arrow["{(\mathrm{id},0,0)}", two heads, from=1-2, to=1-3]
	\arrow["{(\mathrm{id},0,0)}", hook, from=2-1, to=2-2]
	\arrow["{\zeta_n^{-1}}", "\cong"', from=2-2, to=2-3]
	\arrow["{\zeta_n}", "\cong"', from=3-1, to=3-2]
	\arrow["{(0,\mathrm{id},0)}", two heads, from=3-2, to=3-3]
	\arrow["{(0,0,-\mathrm{id})}", hook, from=3-3, to=3-4]
	\arrow["{\zeta_{n+1}^{-1}}", "\cong"',  from=3-4, to=3-5]
\end{tikzcd}\]
\fi 
\end{example}
\iffalse

    If we fix a consistent splitting $\zeta: C_n \cong H_n \oplus B_n \oplus K_n$, consider the chain complex $\cH$ whose chain groups are $\cH_n$ and the differentials are trivial. We have a chain homotopy equivalence $\cH \simeq C$, given by a contraction 
    \[
    \begin{tikzcd}
	\cH & C
	\arrow["g"', shift right, from=1-1, to=1-2]
	\arrow["f"', shift right, from=1-2, to=1-1]
	\arrow["h"', from=1-2, to=1-2, loop, in=35, out=325, distance=5mm]
\end{tikzcd},
    \]
    where $f,g$ are quotients and inclusions of $\cH_n$ in $C_n$, and $h_n: C_n \to C_{n+1}$ is the negative inverse of the isomorphisms $K_{n+1} \xrightarrow{\cong} B_{n}$ induced by $d$: for $ b \in \zeta^{-1}(B_{n})$, {we have $h(b) = -w$ where $dw = b$ and $w \in \zeta^{-1}(K_{n+1})$;} and $h$ is trivial on $\zeta^{-1}(\cH_n)$ and $\zeta^{-1}(K_n)$. One can check that the $f,g$ are chain maps, and $(f,g,h)$ satisfy the conditions of a contraction specified in \cref{eq:contraction_b}.
    \fi 
The example above shows how a homology decomposition reduces a chain complex to a minimal homotopy equivalent complex, composed of homology groups with trivial differentials. 
Our work considers the generalisation of this elementary result to poset-graded chain complexes, which we introduce below. 
Our main result, \Cref{thm:main}, describes how homology decompositions of \emph{relative} chains induce a contraction on the total complex that is compatible with the poset-grading. 
The smaller complex resulting from the contraction is  precisely the Conley complex, which we introduce in \Cref{def:connection_matrix}. We first introduce the main device with which we prove~\Cref{thm:main}, which is the homological perturbation lemma of Brown and Gugenheim~\cite{Brown1965TheTheorem, Gugenheim1972OnFibration}. 

%%%%%%%%%%%%%%%%%%%%%%%%%%%%%%%%

\subsubsection{Homological Perturbation}\label{ssec:homological_perturbation}
Homological perturbation allows us to derive contractions of chain complexes by perturbing known contractions of other complexes. 
A \emph{perturbation} $\delta$ of a chain complex $(A, d)$ is a collection of degree minus one maps $\delta_n: A_n \to A_{n - 1}$ such that $(d+\delta)^2 = 0$. We call $(A, d + \delta)$ the perturbed complex. Given a $P$-graded chain complex $(C,d)$, our aim is to build a trivial contraction of a proxy complex $(A,d_A)$, and a perturbation $\delta$ such that $(C, d) = (A, d_A + \delta)$. We can then apply the following perturbation lemma to obtain a contraction of $(C,d)$.

For the lemma below, we recall a linear transformation $f: V \to W$ is locally nilpotent if for any $v \in V$, there is a finite $n$ such that $f^n(v) = 0$.

\begin{lemma}[{\cite{Brown1965TheTheorem, Gugenheim1972OnFibration}}]\label{lemma:perturbation}
Suppose we are given a contraction
\[
\begin{tikzcd}[ampersand replacement=\&]
    (C', d') \ar[r, "g", shift right=0.3em, swap] \&
    (C, d) 
    \arrow[l, "f", shift right=0.3em, swap]
    \arrow["h", from=1-2, to=1-2, loop, in=345, out=15, distance=10mm]
\end{tikzcd}.
\]
If a perturbation $\delta$ of $C$ is such that the linear map $h \delta: \bigoplus_n C_n \to \bigoplus_n C_n$ is locally nilpotent, then we can obtain a contraction of the perturbed complex $(C, d + \delta)$
\[
\begin{tikzcd}[ampersand replacement=\&]
    (C',d' + \delta')\ar[r, "g'", shift right=0.3em, swap] \&
    {(C, d + \delta)}
    \arrow[l, "f'", shift right=0.3em, swap]
    \arrow["h'", from=1-2, to=1-2, loop, in=345, out=15, distance=10mm]
\end{tikzcd}
\]
where, writing $\cS = \delta(1- h\delta)^{-1} = \sum_{i=0}^\infty \delta (h\delta)^i$, the induced maps are given by
\begin{align}
    \delta' & = f \cS g, &  
    f' &= f + f \cS h, & 
    g' &= g + h \cS g,  & h' &= h +  h\cS h.
\end{align}
\end{lemma}
\begin{remark}
    We note that, in the original presentation of the perturbation lemma~\cite{Brown1965TheTheorem,Gugenheim1972OnFibration}, the chain complexes are required to admit a lower bounded exhaustive filtration; the contraction $(f,g,h)$ need to be compatible with the filtration; and the perturbation $\delta$ must be filtration lowering. These assumptions exist to ensure the sum in $\cS$ is well-defined. Here we subsume this by requiring $h\delta$ %(or equivalently, $\delta h$) 
    to be locally nilpotent. 
\end{remark}

%%%%%%%%%%%%%%%%%%%%%%%%%%%%%%%%
\subsection{Poset-graded Chain Complexes} \label{ssec:pgradedcc}
%%%%%%%%%%%%%%%%%%%%%%%%%%%%%%%%%%%%%%%%%%%%%%%%%%%%%%%%%%%%%%%%%%

In this section, we follow the framework of~\cite{Robbin1992LyapunovFunctor,harker2021, Mrozek2025ConnectionDynamics}. Let $(P, \leq)$ be a poset. 
Henceforth, we impose the following finiteness assumption on the poset $P$.
We say that a partial order on a set $P$ is \emph{well-founded} if there is no strictly descending infinite sequence in $P$. 
That is, for any descending sequence $p_1 \geq p_2 \geq \cdots $, we have that $p_k = p_{k+1}$ for sufficiently large $k$. 

A vector space $V$ is $P$-graded if it admits a decomposition into a direct sum $V  = \bigoplus_{p \in P} V^p$ indexed by poset elements. 
For $p \in P$, we let $\imath^p: V^p \hookrightarrow V$ denote the natural inclusion, and $\jmath^p: V \twoheadrightarrow V^p$ denote the projection $(v_q)_{q \in P} \mapsto v_p$. 
In particular, we note that $\id_V = \sum_p \imath^p \circ \jmath^p$. 

For a linear map $f: V \to W$ between $P$-graded vector spaces $V = \bigoplus_{p \in P} V^p$ and $W = \bigoplus_{p \in P} W^p$, we denote by $f^{pq}$ the composition
\begin{equation*}
    \begin{tikzcd}
f^{pq}: V^q \ar[r, "\imath^q", hookrightarrow] &
V \ar[r, "f"] &
W \ar[r, "\jmath^p", twoheadrightarrow] &
W^p
\end{tikzcd}.
\end{equation*}
\begin{sloppypar}
We say $f$ is \emph{$P$-filtered} if $f^{pq} \neq 0 \implies p \leq q$. Composition of $P$-filtered linear maps are also $P$-filtered~\cite[Prop~ 4.3]{harker2021}: if $f: V \to W$ and $g: W \to U$ are $P$-filtered, then 
\begin{align} \label{eq:graded_compose}
    gf  = \sum_{p \leq  q}(gf)^{pq} \quad \text{and} \quad  (gf)^{pq}  = \sum_{p \leq r \leq q}g^{pr} f^{rq}.
\end{align} 
In particular, $(gf)^{pp}  = g^{pp} f^{pp}$. 
A chain complex $(C,d)$ is \emph{$P$-graded} if the chain group at each dimension $C_n = \bigoplus_{p \in P} \C^p_n$ is $P$-graded, and the differentials $d_n: C_{n} \to C_{n-1}$ are $P$-filtered. For any $p \in P$, we let $F^p_n = \bigoplus_{r \leq p} \C^{r}_n$, and $F^{\vp}_n = \bigoplus_{r \lneq p} \C^{r}_n$ denote the sublevel set and strict sublevel set of the $P$-graded chain complex at $p$. Since $d_n$ are $P$-filtered, and $F^p_n$ and $F^{\vp}_n$ are downward closed with respect to $\leq$, the restrictions of the total complex to $F^p$ and $F^{\vp}$ are also $P$-graded chain complexes. 
\end{sloppypar}

We say a chain map $f: C \to D$ between $P$-graded chain complexes is $P$-filtered if for each dimension $f_n: C_n \to D_n$ are $P$-filtered linear maps. We say an equivalence of $P$-graded chain complexes as in~\cref{eq:chain_homotopy} is $P$-filtered if linear maps $h_n,k_n$ and chain maps $f,g$ are $P$-filtered. In particular, we say a contraction $(f,g,h)$ in the form of~\cref{eq:contraction} is $P$-filtered if $f,g,h$ are all $P$-filtered. 

\begin{remark} \label{rmk:lattice}
    If $P$ is finite, then Birkhoff's representation theorem~(\cite{Birkhoff1933-jg}, or~\cite[Theorem 6.2,10.4]{Roman2008-fh}) relates the poset with the finite distributive lattice $\mathsf{O}(P)$ of downsets of $P$. Recall a subset $A \subset P$ is a downset if it is downward closed: for any $q \in A$, if $p \leq q$, then $p \in A$. For each $p \in P$, we can send it to the corresponding downset $\downarrow p := \{q \in P \ : \ q \leq p\}$. Birkhoff's representation theorem states that $\{\downarrow p\}_{p \in P}$ are precisely the elements of the lattice $\mathsf{O}(P)$ which have a unique predecessor in the lattice partial order --- the \emph{join-irreducible elements} $\mathsf{J}(\mathsf{O}(P))$ --- and this bijection is a poset isomorphism $P \cong\mathsf{J}(\mathsf{O}(P))$. We note that for $\downarrow p$, the unique predecessor in $\mathsf{O}(P)$ is $\downarrow p \setminus \{p\}$.
    Considering the lattice of downset $\mathsf{O}(P)$ allows us to view the $P$-graded chain complex as a lattice filtered complex, a perspective articulated in~\cite[\S 7.2]{harker2021} and~\cite[\S2]{spendlove2025graded}. An $\mathsf{O}(P)$-filtered vector space $V$ is one equipped with a lattice homomorphism $\mathrm{flt}: \mathsf{O}(P) \to \mathsf{Sub}(V)$, where $\mathsf{Sub}(V)$ is the lattice of vector subspaces with join $\vee = +$ and meet $\wedge = \cap$.  
    A $P$-graded chain complex $(C,d)$ is an instance of an $\mathsf{O}(P)$-filtered chain complex, in the following sense. The chain group in each dimension $C_n$ is an $\mathsf{O}(P)$-filtered vector space $\mathrm{flt}_n: \mathsf{O}(P) \to \mathsf{Sub}(C_n)$, where
    \begin{equation*}
        \mathrm{flt}_n(A) = \bigoplus_{p \in A} C^p_n.
    \end{equation*}
    Each differential $d_n: C_n \to C_{n-1}$ is $\mathsf{O}(P)$-filtered, meaning it satisfies $d_n(\mathrm{flt}_n(A)) \subseteq \mathrm{flt}_{n-1}(A)$ for all $A \in \mathsf{O}(P)$. Given this framework, we can then regard the sublevel set chain complexes as lattice filtered subcomplexes of $(C,d)$:
    \begin{equation*}
        \bigoplus_n F^p_n = \mathrm{flt}(\downarrow p)\qc \bigoplus_n F^{\vp}_n = \mathrm{flt}(\downarrow p \setminus \{p\}).
    \end{equation*}
\end{remark}

%%%%%%%%%%%%%%%%%%%%%%%%%%%%%%%%%%%%%%%%
\subsubsection{Relative Chain Complexes}
We define, for $p,q \in P$, the \emph{twisted differentials} to be the linear maps 
\begin{equation}
\begin{tikzcd}
    t_n^{pq}: \C^q_n \ar[r, "\imath^q", hookrightarrow] & 
    C_n \ar[r, "d_n"] & 
    C_{n-1} \ar[r, "\jmath^p", twoheadrightarrow] & 
    \C^p_{n-1}. 
\end{tikzcd}
\end{equation}
In particular, since $d_n$ is $P$-filtered, it follows that $t_n^{pq}\neq 0$ implies $p \leq q$. 
We call $(\C, t)$ a \emph{multicomplex}\footnote{This is the same notion of a multicomplex in~\cite{Wall1961ResolutionsGroups,Gugenheim1974OnProducts,Livernet2020OnMulticomplex} if $P  = \Z$.} indexed by~$P$. To abbreviate notation we let $t^p := t^{pp}$. Because $d^2 = 0$, \cref{eq:graded_compose} implies the twisted differentials of a multicomplex satisfy the following identity for fixed $p \leq q$:
\begin{align} \label{eq:anticommute_steroids}
    \sum_{p \leq r \leq q}t^{pr}_{n-1} t^{rq}_{n}  
    &= (d^2)^{pq}= 0.
\end{align}
In particular,  $t^{p}_{n-1} t^{p}_n = 0$. This implies $(\C^p, t^p)$ are chain complexes in their own right. Because we have a short exact sequence of chain complexes,
\[\begin{tikzcd}
	0 & {F^{\vp}} & {F^p} & {\C^p} & 0 	\arrow[from=1-1, to=1-2]
	\arrow[hook, from=1-2, to=1-3]
	\arrow[two heads, from=1-3, to=1-4]
	\arrow[from=1-4, to=1-5]
\end{tikzcd},\]
the relative chain complex $F^p / F^{\vp}$ is isomorphic to $(\C^p, t^p)$. We hence refer to the chain complex $(\C^p, t^p)$ as the \emph{relative chain complex at $p$}.

\begin{remark}
    While  $t^{pq}_n: \C^q_n \to \C^p_{n-1}$ are linear maps between the chain groups, the relations between the twisted differentials in~\cref{eq:anticommute_steroids} imply the collection of linear maps $t^{pq}_n$ do not constitute a chain map from $(\C^q, t^{q})$ to $(\C^p, t^p)$, as they violate the commutativity condition required. 
    More concretely, generally one has that $t^{pq}_{n-1} \circ t^q_n\neq t^p_{n-1} \circ t^{pq}_n$.
    For instance, in \Cref{ex:relhom} twisted differentials \emph{anti-commute}. 
\end{remark}

\begin{remark} \label{rmk:relative_chain_homotopy} We also note that a $P$-filtered chain homotopy equivalence $C \simeq D$ descends to a chain homotopy equivalence of relative chain complexes $\C^p \simeq \D^p$ for all $p \in P$. The chain homotopy can be obtained by restricting the $P$-filtered maps $f,g,h,k$ in the chain homotopy described in~\cref{eq:chain_homotopy} to the relative chain complexes at each poset level $p$:
\[    \begin{tikzcd}
	\D^p & \C^p
	\arrow["k^p"', from=1-1, to=1-1, loop, in=195, out=165, distance=10mm]
	\arrow["g^p"', shift right, from=1-1, to=1-2]
	\arrow["f^p"', shift right, from=1-2, to=1-1]
	\arrow["h^p"', from=1-2, to=1-2, loop, in=15, out=345, distance=10mm]
\end{tikzcd}.\]
Note that the maps above satisfy the conditions for homotopy equivalence in~\cref{eq:chain_homotopy_b}; for example, since all the maps in sight are $P$-filtered, applying~\cref{eq:graded_compose} to the conditions in~\cref{eq:chain_homotopy_b} one obtains
\begin{align*}
  g^p f^p &= (gf)^p = (1_C + dh + hd)^p  =1_{\C^p}+ t^p h^p + h^p t^p.
\end{align*}
\end{remark}
\begin{example}[Relative homology] \label{ex:relhom} Let $P = \{0 \leq 1\}$. Consider a $P$-graded chain complex $(C,d)$ where $C_n \cong  \C_n^0 \oplus \C^1_n$. We have twisted differentials induced by $d$:
    \[\begin{tikzcd}
	{\C_n^0} & {\C_n^1} \\
	{\C^0_{n-1}} & {\C^1_{n-1}}
	\arrow["{t^{0}_n}", from=1-1, to=2-1]
	\arrow["{t^{01}_n}", from=1-2, to=2-1]
	\arrow["{t^{1}_n}", from=1-2, to=2-2]
\end{tikzcd},\]
which satisfy $t^{01}_{n-1} t^{1}_n +  t^{0}_{n-1} t^{01}_{n}= 0$. 
The sublevel sets are $F^0 = \C^0$ and $F^1 = C$. The chain complex $(\C^1, t^1)$ is isomorphic to the relative chain complex $F^1/F^0$.  The short exact sequence $F_n^{0} \to F_n^{1} \to F^1_n/F^0_n$ gives rise to the long exact sequence
\[
\begin{tikzcd}
   \dots \ar[r] & 
   \rH_n(\C^0) \ar[r] & 
   \rH_n(C) \ar[r] & 
   \rH_n(\C^1) \ar[r, "d_n"
   ] & 
   \rH_{n-1}(\C^0) \ar[r] & 
   \dots \ .
\end{tikzcd}
\]
We leave it as a standard diagram chasing exercise to the reader that the differential 
$
d_n\colon \rH_n(\C^1)  \to \rH_{n -1}(\C^0)
$ 
is induced by the twisted differential $t^{01}_n$. 
\end{example}

%%%%%%%%%%%%%%%%%%%%%%%%%%%%%%%%%%%%%%%%%%%%%%%%
\subsubsection{The Conley Complex}
We say the differential $d$ of a $P$-graded chain complex is \emph{strict} if $t^p_n = 0$ for all $p \in P$ and $n \in \Z$. Note that $d$ is strict if and only if $\C^p_n \cong H_n(\C^p)$. Given a $P$-graded chain complex $(C,d)$, the main subject of our study is the chain homotopic reduction of $(C,d)$ to a strict chain complex, which is the \emph{Conley complex}. 

\begin{definition} \label{def:connection_matrix} Given a $P$-graded chain complex $(C,d)$, we say another $P$-graded chain complex $(\oC, \od)$ is a \emph{Conley complex} of $(C,d)$, if:
\begin{enumerate}[label = \emph{(M\arabic*)}, ref = (M\arabic*)]
    \item \label{M1} There is a $P$-filtered chain homotopy equivalence $(C,d) \simeq (\oC, \od)$; and
    \item  \label{M2} $\od$ is strict.
\end{enumerate}
We refer to the chain groups $\oC^p_n$ of the Conley complex as the \emph{Conley indices}, and we say that a matrix $\Delta$ representing  $\od$ is
a \emph{connection matrix} of $(C,d)$.
\end{definition}
If a Conley complex exists for a $P$-graded chain complex, then it is unique up to $P$-filtered chain isomorphisms. 
This is due to a remarkable fact~\cite[Proposition 4.27]{harker2021} about strict complexes: two strict $P$-graded chain complexes are $P$-filtered chain isomorphic, if and only if they are $P$-filtered chain homotopy equivalent. 
The existence of Conley complexes for finite $P$-graded chain complexes was first shown in~\cite[Theorem 8.1, Cor. 8.2]{Robbin1992LyapunovFunctor}; we also refer the reader to~\cite[Lemma 5.3.1, Theorem 5.3.2]{Mrozek2025ConnectionDynamics} for a more recent account of the proof.

\begin{remark} \label{rmk:con_index}
    We can regard the Conley complex $(\oC, \od)$ of $(C,d)$ as the unique $P$-graded homotopy equivalent complex of $(C,d)$ which is minimal within every poset grade. The Conley complex replaces the relative chain complex $\C^p$ at each poset grading with its minimal equivalent complex (in the sense presented in \Cref{ex:basic_contraction}). The relative chain complexes $(\oC^p, \od^p)$ at each grading of the Conley complex has trivial differentials $\od^p = 0$ (by definition \labelcref{M2}), and relative chain groups 
     \[\oC^p_n \cong \rH_n(\C^p). \] See~\cite[\S 8]{Robbin1992LyapunovFunctor}. This follows from the defining properties of the Conley complex: the strictness condition~\labelcref{M2} implies $\oC^p_n \cong \rH_n(\oC^p)$, the $P$-filtered equivalence~\labelcref{M2} implies via~\Cref{rmk:relative_chain_homotopy} the equivalence of relative chains $\oC^p \simeq \C^p$ at each poset level. Passing to homology, we thus obtain $\oC^p_n \cong \rH_n(\C^p)$. 
\end{remark}

Before we move on to discussing algebraic Morse theory, we first give a few example applications of $P$-graded chain complexes in mathematics. A key example of a $P$-graded chain complex is one derived from (multi-parameter) filtrations of simplicial complexes, a common occurrence in applied and computational topology.  
Let $X$ be a finite simplicial complex, and let $(C,d)$ be a chain complex associated to $X$. 
Viewing $X$ as a poset where the simplices are ordered by face relations, a filter function of $X$ is a monotone map $f: X \to P$ to another poset $P$. Given such a filter function $f$, we can express $(C,d)$ as a $P$-graded chain complex, where $\C^p_n$ are vector spaces whose basis are the $n$-simplices in $\fibre{f}{p}$. 
The chain complexes $F^p$ and $F^{\vp}$ are  then the chain complexes of the subcomplexes $\{f \leq p\}$ and $\{f \lneq p\}$ respectively. 

\begin{example} \label{ex:filtration}
Let $X = \Delta^2$ be the simplicial complex shown in \cref{fig:multipar}. The numbers on vertices, edges and the face indicate the values of a filter function $f \colon X \to P$, where $P$ is the linear poset $\{0 \leq  1 \leq 2 \leq 3\}$. 
The associated chain complex is $P$-graded.
The basis of $\C_1^2$, for example, is given by the two edges (i.e.\ $1$-simplices) marked by $2$ in \cref{fig:multipar}; these are also denoted as $uv$ and $vw$ using the labelling on the right hand side of~\cref{fig:multipar}. 
\begin{figure}[ht]
    \centering
\begin{tikzpicture}[scale=1.4]
    \draw[fill=blue!20] (0,0) -- (1,1) -- (-.5,1.5);
    \node[circle,fill=black,inner sep=0pt,minimum size=3pt,label=below:{$0$}] (a) at (0,0) {};
    \node[circle,fill=black,inner sep=0pt,minimum size=3pt,label=right:{$1$}] (b) at (1,1) {};
    \node[circle,fill=black,inner sep=0pt,minimum size=3pt,label=above:{$2$}] (c) at (-.5,1.5) {};
    \draw (a) -- node[below,xshift=4pt,yshift=2pt]{$1$} (b);
    \draw (b) -- node[above]{$2$} (c);
    \draw (c) -- node[left]{$2$} (a);    
    \node[inner sep=0pt,minimum size=0pt,label=below:{$3$}] (c) at (0.2,1) {};
    \begin{scope}[xshift=3cm]
        \draw[fill=blue!20] (0,0) -- (1,1) -- (-.5,1.5);
    \node[circle,fill=black,inner sep=0pt,minimum size=3pt,label=below:{$u$}] (a) at (0,0) {};
    \node[circle,fill=black,inner sep=0pt,minimum size=3pt,label=right:{$w$}] (b) at (1,1) {};
    \node[circle,fill=black,inner sep=0pt,minimum size=3pt,label=above:{$v$}] (c) at (-.5,1.5) {};
    \draw (a) -- (b);
    \draw (b) -- (c);
    \draw (c) -- (a);   
    \end{scope}
\end{tikzpicture}
    \caption{Left: The simplicial complex $X = \Delta^2$ together with an example of a filtration function $f \colon X \to P$ for the linear poset $P = \{0,1,2,3\}$. Right: the vertices of $\Delta^2$ labelled by $v,u,w$.}
    \label{fig:multipar}
\end{figure}
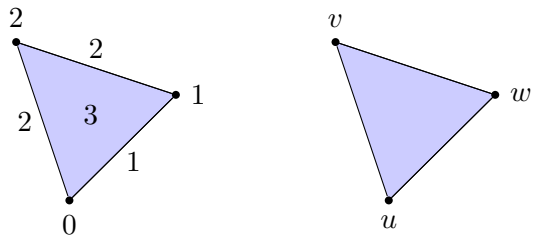
\end{example}

The concept of a $P$-graded chain complex {comes up} in Conley theory for dynamical systems~\cite{Conley1978IsolatedIndex,Mischaikow1995ConleyTheory}, from which we derive {terms} such as the Conley complex and connection matrix. 
Given a dynamical system, and a specified finite lattice of attractors, one can obtain a \emph{Morse decomposition} of the underlying space, a partition indexed over the poset $P$ of join-irreducible elements of the lattice~\cite{Robbin1992LyapunovFunctor,harker2021}. 
The partial order in the poset encodes attractor-repeller relationships between the attractors in the lattice they index. 
The partition $P$ then induces a splitting of the total singular chain complex of the domain as a $P$-graded chain complex. 
In that context, the connection matrix describes algebraic relations between the Conley indices of the isolated invariant sets associated to the Morse decomposition~\cite{Robbin1992LyapunovFunctor,Mischaikow1995ConleyTheory}.

Concrete computational examples of such $P$-graded chain complexes from Morse decompositions can be found in combinatorial dynamics~\cite{Batko2020LinkingDecompositions,Dey2022PersistenceSystems,Lipinski2023Conley-Morse-FormanSpaces,Dey2024ComputingReductions}. A \emph{multivector field} on a simplicial complex is a partition of the simplicial complex, where each partition element (multivector) is a convex subset of the face poset of the complex~\cite[Def 2.2]{Dey2024ComputingReductions}. This notion of a combinatorial vector field generalises Forman's discrete vector field~\cite{Forman1998MorseComplexes}, defined as a partial matching of cells. The associated multivector field generates a multi-valued function on simplices, and the combinatorial dynamical system is given by iterations of this multi-valued function. The Morse decomposition of the simplicial complex can then be obtained as the condensation of a directed graph associated to the dynamical system. We give a brief illustrative example below but refer the reader to~\cite[\S 3.4]{Dey2019PersistentDynamics} for technical details. 

\begin{example} \label{ex:multivector}
Consider the simplicial complex $X$ shown on the left hand side of \cref{fig:multivector} with vertex set $\{A,B,C,D,E\}$. The set 
\[
    \{\{B,AB\}, \{A,AE\}, \{E,ED\}, \{D,CD\}, \{C,BC\}, \{AD\}, \{AC, ABC\}, \{ADE\}\}
\]
is a multivector field on $X$. The partition into Morse sets and the corresponding poset $P$ are shown on the right of \cref{fig:multivector}. The dynamics on edges are indicated by the arrows. 
The chain complex of $X$ is $P$-graded. 
The chain group $C_1(X)$, for example, is $7$-dimensional with one basis element corresponding to the red edge, one to the blue edge and the remaining five corresponding to the trapezoid. 
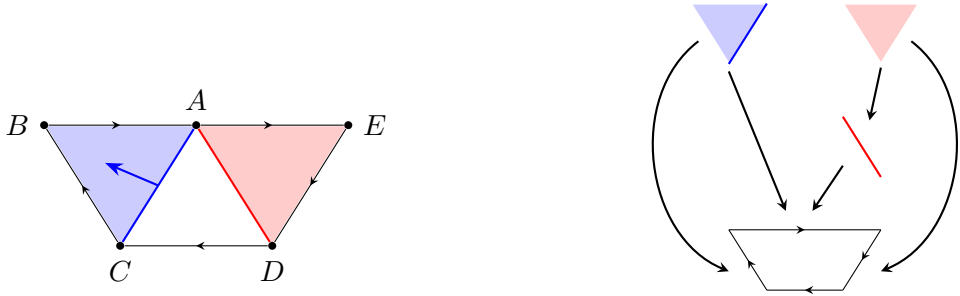
\begin{figure}[th]
\centering
\begin{align*}
\begin{tikzpicture}[decoration={markings,mark=at position 0.5 with {\arrow{stealth}}}] 
\draw[thick,draw=white, fill=blue!20] (2,1) -- (0,1) -- (1,-.6) -- (2,1) -- cycle;
\draw[thick,draw=white, fill=red!20] (2,1) -- (4,1) -- (3,-.6) -- (2,1) -- cycle;
\node[circle,fill=black,inner sep=0pt,minimum size=3pt,label=above:{$A$}] (A) at (2,1) {};
\node[circle,fill=black,inner sep=0pt,minimum size=3pt,label=left:{$B$}] (B) at (0,1) {};
\node[circle,fill=black,inner sep=0pt,minimum size=3pt,label=below:{$C$}] (C) at (1,-.6) {};
\node[circle,fill=black,inner sep=0pt,minimum size=3pt,label=below:{$D$}] (D) at (3,-.6) {};
\node[circle,fill=black,inner sep=0pt,minimum size=3pt,label=right:{$E$}] (E) at (4,1) {};
\draw[postaction={decorate}] (B) -- (A);
\draw[postaction={decorate}] (A) -- (E);
\draw[postaction={decorate}] (E) -- (D);
\draw[postaction={decorate}] (D) -- (C);
\draw[postaction={decorate}] (C) -- (B);
\draw[thick,draw=blue] (A) -- (C);
\draw[thick,draw=blue,-{Stealth[blue]} ] (1.5,.2) -- (0.8,.5);
\draw[thick,draw=red] (A) -- (D);
\end{tikzpicture}
& \hspace{3cm}
\begin{tikzpicture}[scale=.5,decoration={markings,mark=at position 0.5 with {\arrow{stealth}}}]
\draw[thick,draw=white, fill=blue!20] (2,1) -- (0,1) -- (1,-.6) -- (2,1) -- cycle;
\draw[thick,draw=white, fill=red!20, xshift=2cm] (2,1) -- (4,1) -- (3,-.6) -- (2,1) -- cycle;
\draw[thick,draw=blue] (2,1) -- (1,-.6);
\draw[thick,draw=red,yshift=-3cm,xshift=2cm] (2,1) -- (3,-.6);
\draw[yshift=-6cm,xshift=1cm,postaction={decorate}] (0,1) -- (4,1); 
\draw[yshift=-6cm,xshift=1cm,postaction={decorate}] (4,1) -- (3,-.6);
\draw[yshift=-6cm,xshift=1cm,postaction={decorate}] (3,-.6) -- (1,-.6);
\draw[yshift=-6cm,xshift=1cm,postaction={decorate}] (1,-.6) -- (0,1);
\draw[-stealth,thick] (1,-.8) -- (2.5,-4.5);
\draw[-stealth,thick] (5,-.7) -- (4.7,-2.1);
\draw[-stealth,thick] (.2,0) to[bend right=60] (1,-6.1);
\draw[-stealth,thick] (5.8,0) to[bend left=60] (5,-6.1);
\draw[-stealth,thick] (4,-3.3) -- (3.2,-4.5);
\end{tikzpicture}
\end{align*}
\caption{Left: the simplicial complex with the multivector field from Example~2.4. Right: the poset associated to the partition into Morse sets.}
\label{fig:multivector}
\end{figure}

\end{example}

\subsection{Algebraic Morse Theory for Based Complexes}\label{ssec:AMT}

We now introduce Sk\"oldberg's algebraic Morse theory framework~\cite{Skoldberg2006MorseViewpoint}. The main idea is to generalise acyclic partial matchings on cell complexes in discrete Morse theory~\cite{Forman1998MorseComplexes} to a purely algebraic setting. We first define the based complex, which generalises the notion of cells, and define Morse matchings, which is the algebraic anlogue of acyclic partial matchings. We then give a summary of Sk\"oldberg's derivation of algebraic Morse theory from homological perturbation theory (\cite{Skoldberg2018}), which concerns how Morse matchings can be used to derive contractions of chain complexes. 

\subsubsection{Based Complexes}
\begin{definition}
   A chain complex of vector spaces $(C, d)$ is a \emph{based complex} if for each vector space $C_n$, there is a direct sum decomposition $C_n \cong  \bigoplus_{a \in I_n} C_n^{(a)}$ as vector spaces, where $I = (I_n)_{n \in \Z}$ is a mutually disjoint collection of indexing sets.  
\end{definition}
Given a based complex $(C,d)$ for $a \in I_n$, there is an injection $\imath_n^{(a)} \colon C_n^{(a)} \to C_n$ and a projection $\jmath_n^{(a)} \colon C_n \to C_n^{(a)}$ such that $\jmath_n^{(a)} \imath_n^{(a)} = \id_n^{(a)}$, where $\id_n^{(a)}$ denotes the identity map on $C_n^{(a)}$. For a chain map of degree $r\in \Z$, $f\colon C \to C[r]$, and for $a \in I_{n+r}$ and $b \in I_{n}$, we denote by $f^{(a b)}$ the following composition
\[
\begin{tikzcd}
C_n^{(b)} \ar[r, "\imath_n^{(b)}", hookrightarrow] &
C_n \ar[r, "f"] &
C_{n+r} \ar[r, "\jmath_{n+r}^{(a)}", twoheadrightarrow] &
C_{n+r}^{(a)} 
\end{tikzcd}
\]
In particular, given $a \in I_{n-1}$ and $b \in I_n$, we consider the restriction of the differential $d^{(ab)}: C^{(b)}_{n} \to C^{(a)}_{n-1}$. 

Next, we give a concrete example of a based complex due to a homology decomposition of chain groups. 

\begin{example}\label{ex:homology_decomposition_based}
    Let $C$ be a chain complex equipped with a homology decomposition $\zeta_n: C_n \xrightarrow{\cong} H_n \oplus B_n \oplus K_n$. We can express $C$ as a based complex by writing the chain groups as 
    \[C_n \cong  \bigoplus_{V \in \{H, B,K\}} C^{(V)}_n \] 
    where $C^{(V)}_n := V_n$ for $V \in \{H, B,K\}$. Given the maps $\zeta_n$, the projections and inclusions are given by
    \begin{align*}
        \jmath_n^{(V)} &: C_n \xrightarrow[\cong]{\zeta_n} H_n \oplus B_n \oplus K_n \twoheadrightarrow V_n  \\ \imath_n^{(V)}&: V_n \hookrightarrow H_n \oplus B_n \oplus K_n \xrightarrow[\cong]{(\zeta_n)^{-1}} C_n .
    \end{align*}
For $V,W \in \{H, B, K\}$, the differentials restricted to each subgroup are
\begin{equation} \label{eq:based_restricted_diff}
    d_n^{(WV)} : V_n \xhookrightarrow{\imath^{(V)}_n} C_n \xrightarrow{d_n} C_{n-1} \overset{\jmath^{(W)}_n} {\twoheadrightarrow} W_n.
\end{equation}
As $\zeta$ is a homology decomposition, 
and since $(\zeta_n)^{-1}(H_n \oplus B_n) = \ker d_n$, the only non-zero map is where $V = K$ (i.e. when $d$ acts on the pre-boundaries). 
Furthermore,  the characterisation of homology decompositions in \cref{eq:homology-decomposition} implies  $d_n^{(BK)}: K_n \to B_{n-1}$ is the identity (by definition) $K_n := B_{n-1}$ on the right hand side of the commutative diagram, and the only non-zero block for $V,W \in \{H,B,K\}$. 
\end{example}

%%%%%%%%%%%%%%%%%%%%%%%%%%%%%%%%%%%%%%%%%
\subsubsection{Morse Matchings}
We now introduce the concept of Morse matchings as given in~\cite{Skoldberg2006MorseViewpoint}. Given a based complex $(C, d)$, we denote by $G(C)$ a directed graph with vertices indexed over the subspace indices $\bigsqcup_n I_n$, together with directed edges $b \rightharpoonup a$ whenever $d^{(ab)}\neq 0$, for all $n \in \Z$ with $b \in I_n$ and $a \in I_{n-1}$. 
A \emph{partial matching $\sM$ on a digraph} $G$ is a subset of directed edges $\sM$ such that no two edges in $\sM$ share a common vertex. 
We denote $G^\sM$ to be the digraph obtained by reversing the orientation of the directed edges in $\sM$. We use $a \rightharpoondown b$ to denote an edge in $G^\sM$ which corresponds to $b \rightharpoonup a$ in $\sM$.

\begin{example}\label{ex:homology_decomposition_based_2} 
    In the case where the based complex is given by a homology decomposition (\Cref{ex:homology_decomposition_based}), the digraph $G(C)$ has vertices indexed over $I_n = \{H_n, B_n, K_n\}$ for $n \in \Z$, and directed edges $K_n \rightharpoonup B_{n-1}$. We can construct a partial matching $\sM$ by including all such directed edges whenever $B_n \neq 0$. The digraph $G^\sM$ is then the directed graph with directed edges $B_{n-1} \rightharpoondown K_n$.
\end{example}
\begin{definition}\label{def:Morse-matching}
    Let $(C, d)$ be a based complex.
    A partial matching $\sM$ on $G(C)$ is a \emph{Morse matching} if:
    \begin{enumerate}
        \item $d^{(ab)}: C^{(b)} \to C^{(a)}$
        is an isomorphism for all 
        $(b \rightharpoonup a)
        \in \sM$; and
        \item We have a well-founded partial order $\preceq_n$ on $I_n$ satisfying the following: for $a \neq b$, if there is a directed path $a\rightharpoonup c \rightharpoondown b$ or $a\rightharpoondown  c  \rightharpoonup b$ in $G(C)^\sM$, then we have $a \succneq_n b$.
    \end{enumerate}
\end{definition}
If $\sM$ is a Morse matching, then we say $\sM^0$ is the \emph{critical set} of the matching, which consists of indexing elements in $\bigcup_{n \in \Z} I_n$ which are neither sources nor targets of any edge in~$\sM$.

\begin{example} \label{ex:homology_decomposition_based_3}
    Consider the case where the based complex is given by a homology decomposition (\Cref{ex:homology_decomposition_based}), and the partial matching $\sM$ given in \Cref{ex:homology_decomposition_based_2} where all the directed edges $K_n \rightharpoonup B_{n-1}$ are included. In this case, $\sM$ is a Morse matching. We first note that all non-trivial paths in $G(C)^\sM$ are of length $1$ from $B_{n-1} \rightharpoondown K_n$, and thus there are no relations between elements of $I_n$. The second condition of a Morse matching is thus trivially satisfied for $\sM$. Since $d_n^{(BK)}$ are isomorphisms, the first condition of the Morse matching is also satisfied for $\sM$.  Because all $K_n,B_n$'s are respectively sources and edges of the Morse matching, the critical set $\sM^0$ is given by the homology groups $\{H_n\}$. 
\end{example}
We note that~\Cref{def:Morse-matching} does not impose conditions on $G(C)$. 
However, when $G(C)$ is finite, there is a simple characterisation for Morse matchings.
\begin{lemma}[{\cite[Lemma 1]
{Skoldberg2006MorseViewpoint}}]\label{lemma:morse-cycles-matching}
    Let $(C, d)$ be a based complex such that the digraph $G(C)$ is finite, and let $\sM$ be a partial matching on $G(C)$ such that $d^{(ab)}$ is an isomorphism for all 
    $(b \rightharpoonup a)\in \sM$. 
    Then, $\sM$ is a \emph{Morse matching} if and only if $G(C)^\sM$ has no directed cycles.
\end{lemma}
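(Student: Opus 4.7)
The plan is to prove both implications using the structural dichotomy of arrows in $G^\mu_K$: arrows from dimension $n$ to $n-1$ are unmatched original edges (call these \emph{down} arrows), while arrows from $n-1$ to $n$ are reversed matched edges (call these \emph{up} arrows). Throughout I will use only that $\prec$ on each $I_n$ is a strict partial order and that each $I_n$ is finite (since $G_K$ is).

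For the direction Morse matching $\Rightarrow$ $G^\mu_K$ acyclic, I argue by contradiction. The first step is the combinatorial observation that no two consecutive arrows in a directed walk can both be up arrows: such a pair $v_{i-1} \to v_i \to v_{i+1}$ forces both $(v_i \to v_{i-1})$ and $(v_{i+1} \to v_i)$ to lie in $\mu$, two matched edges sharing the vertex $v_i$, contradicting the definition of a partial matching. Suppose now there is a directed cycle of length $k$. Since each arrow changes dimension by $\pm 1$ and the cycle closes, the numbers of up and down arrows agree, say $m$ each, so $k = 2m$. Combining this with the no-consecutive-ups constraint applied cyclically, the only permissible arrangement places exactly one down between every pair of consecutive ups; the pattern is thus $(DU)^m$ and the cycle oscillates between exactly two adjacent dimensions $d$ and $d-1$. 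Writing the cycle as $v_0 \to v_1 \to \cdots \to v_{2m-1} \to v_0$ with $v_{2i} \in I_d$, each length-two subpath $v_{2i} \to v_{2i+1} \to v_{2i+2}$ satisfies the hypothesis of the Morse matching condition, yielding $v_{2i+2} \prec v_{2i}$ in $I_d$. Iterating gives $v_0 \succ v_2 \succ \cdots \succ v_{2m} = v_0$, contradicting antisymmetry of $\prec$ on $I_d$.

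For the converse, acyclic $\Rightarrow$ Morse matching, I construct the partial orders directly. On each $I_n$, define $\prec_n$ to be the transitive closure of the relation $\{(\beta, \alpha) : \exists\, \gamma\ \text{with}\ \alpha \to \gamma \to \beta\ \text{in}\ G^\mu_K\}$. A violation of antisymmetry in $\prec_n$ would produce a chain $\alpha_0 \to \gamma_0 \to \alpha_1 \to \gamma_1 \to \cdots \to \alpha_0$, which concatenates to a directed cycle in $G^\mu_K$; this is excluded by hypothesis. So $\prec_n$ is a strict partial order, and well-foundedness follows immediately from finiteness of $I_n$, thereby verifying the Morse matching condition.

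The step I expect to be the main obstacle is the rigidity argument in the forward direction, namely the deduction that every directed cycle in $G^\mu_K$ must alternate down-up and so live on only two adjacent dimensions. This combinatorial claim relies on combining the partial matching condition (no two consecutive up arrows) with the dimension-balance requirement of a cycle; once it is established, the Morse matching hypothesis on length-two subpaths supplies the contradicting descent chain without further effort.
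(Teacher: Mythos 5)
Your proof is correct and complete. The paper states this lemma citing Sk\"oldberg's original (Lemma~1 of \cite{Skoldberg2006MorseViewpoint}) without reproducing the argument; your proof is the standard one, with the forward direction resting on the forced down--up alternation of any directed cycle (no two consecutive reversed arrows, together with the dimension balance of a closed walk) and the converse on the transitive closure of the two-step path relation on each $I_n$, whose well-foundedness follows from finiteness. One small wording issue: a cycle would directly violate \emph{irreflexivity} of $\prec_n$ rather than antisymmetry, but since your relation is transitive by construction the two properties are equivalent, so the argument stands as written.
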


%%%%%%%%%%%%%%%%%%%%%%%%%%%%%%%%%%%%
\subsubsection{Contractions via Morse Matchings}
We now recall the main result of~\cite{Skoldberg2006MorseViewpoint,Skoldberg2018}. By applying the perturbation lemma (\Cref{lemma:perturbation}), Sk\"oldberg derives a contraction of a based complex $C$ given a Morse matching $\sM$. Let us denote by
\[
    \widetilde{C}_n = \bigoplus_{a \in \sM^0_n} C^{(a)}_n
\] 
the critical subspaces; we let $f_n : \widetilde{C}_n \leftrightarrows C_n: g_n$ be the natural inclusion and projection maps between the subspace $\widetilde{C}_n$ and $C_n$.  We also define $h_n: C_n \to C_{n+1}$ and $\tau_n: C_n \to C_{n - 1}$:
\begin{align}
    h_n &= -\sum_{(b\rightharpoonup a) \in \sM} \imath_{n+1}^{(b)} \circ (d_{n+1}^{(ab)})^{-1}  
    \circ \jmath_n^{(a)} & \tau_n &= \sum_{(b\rightharpoonup a) \notin \sM}\imath_{n-1}^{(a)} \circ d_n^{(ab)} 
    \circ \jmath_n^{(b)} . \label{eq:based_elementary_contr_maps}
\end{align}
Since $\sM$ is a Morse matching, for $b\rightharpoonup a$ in $\sM$, $d^{(ab)}$ is an isomorphism so $h$ is well-defined. By perturbing the contraction 
\begin{equation} \label{eq:basecontraction_morsematching}
    \begin{tikzcd}[ampersand replacement=\&]
    (\widetilde{C}, 0) \ar[r, "g", shift right=0.3em, swap] \&
    (C,d-\tau) 
    \arrow[l, "f", shift right=0.3em, swap]
     \arrow["h", from=1-2, to=1-2, loop, in=345, out=15, distance=10mm]
    \end{tikzcd}
\end{equation}
with $\tau$ on the right hand side, we can obtain the contraction~\cref{eq:amt_contraction} in \Cref{thm:skoldberg} via the perturbation lemma (\Cref{lemma:perturbation}). 

\begin{theorem}[{\cite[Lemma 2]{Skoldberg2018}}]\label{thm:skoldberg} Let $(C,d)$ be a based complex where a Morse matching $\sM$ exists. For $h,\tau$ as defined in~\cref{eq:based_elementary_contr_maps}, if $h\tau: \bigoplus C_n \to \bigoplus C_n$ is locally nilpotent, then we have a contraction 
\begin{equation} \label{eq:amt_contraction}
    \begin{tikzcd}[ampersand replacement=\&]
    (\widetilde{C}, \widetilde{d}) \ar[r, "\widetilde{g}", shift right=0.3em, swap] \&
    (C,d) 
    \arrow[l, "\widetilde{f}", shift right=0.3em, swap]
     \arrow["\widetilde{h}", from=1-2, to=1-2, loop, in=345, out=15, distance=10mm]
    \end{tikzcd},
\end{equation}
where $\widetilde{C}_n = \bigoplus_{a \in \sM^0_n} C^{(a)}_n$, and the induced maps are given by
\begin{align}
    \widetilde{d} & = f \cS g, &  
    \widetilde{f} &= f + f \cS h, & 
    \widetilde{g} &= g + h \cS g,  & \widetilde{h} &= h +  h\cS h.
\end{align}
for $f,g,h,\tau$ as defined in \cref{eq:based_elementary_contr_maps} and $\cS=\tau(1-h\tau)^{-1}$. 
\end{theorem}

\begin{example} \label{ex:homology_decomposition_based_4} Given a chain complex with a homology decomposition $\zeta$, we recover the contraction of the chain complex in \Cref{ex:basic_contraction} if we `apply' \Cref{thm:skoldberg} to the case where we consider the chain complex as a based complex via $\zeta$ (\Cref{ex:homology_decomposition_based}), and impose the Morse matching in \Cref{ex:homology_decomposition_based_2,ex:homology_decomposition_based_3} such that the critical sets are $\widetilde{C}_n = H_n(C)$. 
In this case, we verify that $\tau = 0$; thus $\widetilde{d} = 0$,  $h\tau$ is trivially nilpotent, and the contraction $(\widetilde{f},\widetilde{g},\widetilde{h})$ is exactly $(\alpha, \beta, \gamma)$ from \Cref{ex:basic_contraction}. This is because  $\widetilde{f} = f=\alpha$ and $\widetilde{g} = g =\beta$ are respectively the projection and inclusion maps between $H_n(C)$ and $C$. To see that $\widetilde{h}_n = \gamma_n$, we first note that  $\widetilde{h}_n = h_n = -\imath_{n+1}^{(K)} \circ (d_{n+1}^{(BK)})^{-1} \circ \jmath_n^{(B)}$, then show that $h_n = \gamma_n$ by expressing $\imath_{n+1}^{(K)},\jmath_n^{(B)}$ in terms of $\zeta$:
\begin{equation}
    h_n: C_n \xrightarrow{\zeta_n} H_n \oplus B_n \oplus K_n \twoheadrightarrow B_n \xrightarrow[=]{(d_{n+1}^{(BK)})^{-1}} K_{n+1}  \hookrightarrow H_{n+1} \oplus B_{n+1} \oplus K_{n+1} \xrightarrow{\zeta_{n+1}^{-1}} C_{n+1}\xrightarrow{-1} C_{n+1},
\end{equation}
where we recall that $d_{n+1}^{(BK)}$ recovers the equality $K_{n+1} := B_n$ from \Cref{ex:homology_decomposition_based}.
\end{example}

In \Cref{ssec:conley_amt}, we see how a $P$-graded chain complex equipped with homology decompositions of relative homology groups can be cast as a based complex, and thus allow us to apply \Cref{thm:skoldberg} to obtain a contraction. Compared to the case with trivial grading as described in \Cref{ex:homology_decomposition_based_4}, the perturbation $\tau$ and  differential $\widetilde{d}$  of the contracted complex is no longer trivial. 
%%%%%%%%%%%%%%%%%%%%%%%%%%%%%%%%%%%%%%%%%%%%%%%%%%%%%%%%%%%

\section{Algebraic Description of the Conley Complex}
In this section we prove our main result and provide a derivation of the Conley Complex of a $P$-graded chain complex $(C,d)$. In~\Cref{ssec:relhomperturb}, we achieve this by employing the homological perturbation lemma introduced in~\Cref{ssec:homological_perturbation}, which extends the contraction of relative homology groups $C^p$ induced by homology decompositions to a $P$-filtered contraction of the whole complex. In~\Cref{ssec:conley_amt}, we show that we can equivalently conceptualise the problem of obtaining a Conley Complex as that of finding a Morse matching on $(C,d)$ as a based complex, under the framework of \Cref{ssec:AMT}. We show that homology decompositions of $C^p$ allow us to express $(C,d)$ as a based complex and obtain a Morse matching. Under this choice, we arrive at the same $P$-filtered contraction of $(C,d)$ as the one produced with the first approach in~\Cref{ssec:relhomperturb}.

\label{sec:connectionmatrix}
\subsection{The Conley Complex via Contractions of Relative Homology Groups} \label{ssec:relhomperturb}
We first outline a construction that forms the heart of the proof of \Cref{thm:main}. Given a $P$-graded chain complex $(C,d)$, consider the contraction of relative chain complexes $(C^p, t^p)$ at each poset grade $p\in P$ given by elementary contractions induced by a homology decomposition (\Cref{ex:basic_contraction}):
\begin{equation} \label{eq:contraction_relative}
\begin{tikzcd}[ampersand replacement=\&]
    (H(C^p), 0) \ar[r, "\beta^p", shift right=0.3em, swap] \&
    (C^p, t^p)
    \arrow[l, "\alpha^p", shift right=0.3em, swap]
    \arrow["\gamma^p", from=1-2, to=1-2, loop, in=345, out=15, distance=10mm]
\end{tikzcd}
\end{equation}
The contractions above allow us to define a new contraction by taking direct sums over poset grades $p \in P$. On the right hand side of the contraction, the direct sum of $(C^p, t^p)$ over $p \in P$ yields a $P$-graded chain complex $(C,\widehat{d})$; the direct sum yields the same chain groups $C_n = \bigoplus_{p \in P} C^p_n$ of the initial complex $(C,d)$ by definition, and the differential is given by 
\begin{equation}
    \widehat{d} = \sum_{p \in P} \imath^p \circ t^p \circ \jmath^p 
\end{equation}
On the left hand side of the contraction, the direct sum yields a $P$-graded chain complex $(\oC, 0)$ with trivial differentials, where $\oC^p_n = H_n(C^p)$ and $\oC_n = \bigoplus_{p \in P} \oC^p_n$. We let $\bar{\imath}_n^p: \oC^p_n \hookrightarrow \oC_n$ and $\bar{\jmath}_n^p:  \oC_n \twoheadrightarrow \oC^p_n$ denote the natural inclusion and projection onto factors. Note that the chain groups of $\oC$ are isomorphic to those of the Conley complex $\oC$ of $C$, as $\oC^p_n \cong H_n(C^p)$ for all $p \in P$ (\Cref{rmk:con_index}). By taking direct sums over $\alpha^p, \beta^p, \gamma^p$, we obtain a contraction %, and taking the inclusions and projections $\imath^p,\jmath^p,\bar{\imath}^p,\bar{\jmath}^p$ into account
\begin{equation} 
\label{eq:Pgraded_elementary_contraction}
    \begin{tikzcd}[ampersand replacement=\&]
    (\oC, 0) \ar[r, "\beta", shift right=0.3em, swap] \&
    (C, \widehat{d})
    \arrow[l, "\alpha", shift right=0.3em, swap]
    \arrow["\gamma", from=1-2, to=1-2, loop, in=345, out=15, distance=10mm]
\end{tikzcd},
\end{equation}
where the maps are given by 
\begin{align}
    \alpha &= \sum_{p \in P} \bar{\imath}^p \circ  \alpha^p \circ \jmath^p, & \beta &= \sum_{p \in P} \imath^p \circ \beta^p \circ \bar{\jmath}^p, & \gamma &= \sum_{p \in P}\imath^p \circ \gamma^p \circ \jmath^p. \label{eq:abc_proxy}
\end{align}
We note that this is a contraction that only takes the differential within each level set into account.

The main idea of the proof is to perturb $(C, \widehat{d})$ by 
\begin{equation} \label{eq:delta_graded}
    \delta \coloneqq d - \widehat{d}=\sum_{p \lneq q} \imath^p \circ t^{pq} \circ \jmath^q.
\end{equation} 
Since $\widehat{d} =d-\delta$, 
we recover our original complex $(C,d)$ by perturbing $(C,\widehat{d})$ by $\delta$. In order to apply the perturbation lemma (\Cref{lemma:perturbation}) to derive a contraction of $(C,d)$, we first verify the local nilpotency condition. 

\begin{lemma}\label{lem:pgraded-locallynilp}
    For $\gamma$ and $\delta$ as defined in \cref{eq:abc_proxy,eq:delta_graded} respectively, $\gamma\delta$ is locally nilpotent for $P$ a well-founded poset. 
\end{lemma}
\begin{proof}
    To start, notice that $\gamma \delta =\sum_{p < q} \imath^p \circ (\gamma^pt^{pq})\circ \jmath^q$. 
    Thus, inductively, one can show that
    \begin{equation}~\label{eq:locally-nilpotent}
        (\gamma \delta)^n = \underbrace{\gamma \delta \circ \dots \circ \gamma\delta}_{n \text{ times}} = \sum_{p_n < \cdots < p_1 < p_0} \imath^{p_n} \circ (\gamma^{p_n} t^{p_n p_{n-1}}) \cdots (\gamma^{p_2}t^{p_2 p_1})(\gamma^{p_1}t^{p_1 p_0}) \circ \jmath^{p_0}.
    \end{equation}
    Now, suppose that $\gamma \delta$ was not locally nilpotent. 
    Then, there exists some $q \in P$ and $v \in C^q_m$ such that $(\gamma \delta)^n(v)\neq 0$ for all $n\in \bN$.
    Consequently, for $n\in \bN$, by \cref{eq:locally-nilpotent}, there exists strictly decreasing chains $p_n < \cdots < p_1 < p_0=q$.
    Now, since $(\gamma\delta)^{n+1}(v)\neq 0$, there is at least one of these chains that increases by one smallest term, obtaining the chain $p_{n+1}<p_n < \cdots < p_1 < p_0=q$. Inductively, we obtain an infinite decreasing chain in $P$, however, this contradicts the hypotheses that $P$ is a well-founded poset.
    Thus $\gamma\delta$ is locally nilpotent.  
\end{proof}
Consequently, \Cref{lemma:perturbation} implies upon perturbing $(C,\widehat{d})$ by $\delta$ to obtain $(C,d)$, we obtain a con\-trac\-tion
\begin{equation}
    \begin{tikzcd}[ampersand replacement=\&]
    (\oC, \od) \ar[r, "\bar{\beta}", shift right=0.3em, swap] \&
    (C, d)
    \arrow[l, "\bar{\alpha}", shift right=0.3em, swap]
    \arrow["\bar{\gamma}", from=1-2, to=1-2, loop, in=345, out=15, distance=10mm]
\end{tikzcd} \label{eq:contraction_conley}
\end{equation}
where $\cS = \delta(1- \gamma\delta)^{-1}$, and 
\begin{align}
    \od & = \alpha \cS \beta, & 
    \bar{\alpha} &= \alpha + \alpha \cS \gamma, & 
    \bar{\beta} &= \beta + \gamma \cS \beta,  & \bar{\gamma} &= \gamma +  \gamma \cS \gamma . \label{eq:dM}
\end{align}
We now verify that the contraction is $P$-filtered, and the induced differential $\od$ is strict. 

\begin{lemma}\label{lem:contraction_Pfiltered} The contraction of $(C,d)$ obtained by perturbing $(C,\widehat{d})$ by $\delta$ is $P$-filtered.
\end{lemma}

\begin{proof}
We first check that $\cS = \delta(1- \gamma\delta)^{-1}$ is $P$-filtered by writing it as a sum $\cS = \delta \sum_{i = 0}^\infty(\gamma \delta)^i$, which is well-defined since we have shown in \Cref{lem:pgraded-locallynilp} that $\gamma\delta$ is locally nilpotent. By construction, both $\gamma$ and $\delta$ are $P$-filtered (see \cref{eq:abc_proxy,eq:delta_graded}). Since sums, powers and compositions of $P$-filtered maps are $P$-filtered, we thus conclude that $\cS$ is $P$-filtered. 

Since  $\alpha, \beta, \gamma$ are also $P$-filtered  by construction (\cref{eq:abc_proxy}), 
$\bar{\alpha},\bar{\beta},\bar{\gamma},\od$ are all $P$-filtered. 
\end{proof}

\begin{lemma} \label{lem:connection_strict} The differential $\od= \alpha \cS \beta$ obtained by perturbing $(C,\widehat{d})$ by $\delta$ is strict.
\end{lemma}
\begin{proof}
    Because we have shown above that $\cS$ is $P$-filtered, and $\alpha, \beta$ are $P$-filtered by construction, 
    $\od^p = \alpha^p \cS^p \beta^p$.
    By construction $\cS^p=0$ for all $p \in P$ since $\delta^p=0$.
\end{proof} 

We have now assembled the relevant material to prove our main theorem.
\mainthm*
\begin{proof}
  Consider the perturbation $\delta$ \cref{eq:delta_graded} to the contraction in \cref{eq:Pgraded_elementary_contraction}:
  \[    \begin{tikzcd}[ampersand replacement=\&]
    (\oC, 0) \ar[r, "\beta", shift right=0.3em, swap] \&
    (C, \widehat{d})
    \arrow[l, "\alpha", shift right=0.3em, swap]
    \arrow["\gamma", from=1-2, to=1-2, loop, in=345, out=15, distance=10mm]
\end{tikzcd}.\]
  Since $P$ is well-founded, the chain homotopy $\gamma$ and perturbation $\delta$ are such that $\gamma \delta$ are locally nilpotent (\Cref{lem:pgraded-locallynilp}), the conditions of the perturbation lemma (\Cref{lemma:perturbation}) are satisfied for the perturbation $\delta$ to the contraction in  \cref{eq:Pgraded_elementary_contraction}. By \Cref{lem:contraction_Pfiltered}, the contraction of $(C,d)$ onto $(\oC, \od)$ (\cref{eq:contraction_conley}) obtained via the application of the perturbation lemma is $P$-filtered. Thus, the contraction satisfies condition \labelcref{M1} in the definition of the Conley complex (\Cref{def:connection_matrix}). Furthermore, we have shown that the differential $\od$ of the contracted complex is strict (\Cref{lem:connection_strict}), satisfying condition \labelcref{M2}. Thus, $(\oC, \od)$ is a Conley complex of $(C,d)$. 
\end{proof}

\begin{remark} \label{rmk:novelty}
We briefly remark on how our approach contrasts with that of~\cite[Theorem~8.1]{Robbin1992LyapunovFunctor}, where they prove the existence of the Conley complex for a $P$-graded chain complex $(C,d)$ implicitly constructing a contraction by induction (see also~\cite[Theorem 5.3.2]{Mrozek2025ConnectionDynamics} for a contemporary account of the proof). Similar to their approach, our result~\Cref{thm:main} also relies on homology decompositions. 
Robbin and Salamon~\cite{Robbin1992LyapunovFunctor,Mrozek2025ConnectionDynamics} 
showed that there exists a $P$-filtered subcomplex $(M, d\rvert_M)$ of $C$ which is a Conley complex for $C$. 
Our perturbative derivation can be thought of as being `dual' to their approach: our chain groups are the naive choice of relative homology subgroups $\bigoplus_{p \in P} H_\bullet(C^p)$, and we derive a new differential $\od$ from $(C,d)$ using the perturbation lemma (\Cref{lemma:perturbation}). 
\end{remark}

\subsection{The Conley Complex from Morse Matchings} \label{ssec:conley_amt}
We can also arrive at our main result~\Cref{thm:main} for $P$-graded chain complexes via the formalism of algebraic Morse theory for based complexes, as outlined in~\Cref{ssec:AMT}. We do so by expressing $P$-graded complex $(C,d)$ equipped with splittings of relative chains as a based complex in the fashion of \Cref{ex:Pfiltbased}, and defining a Morse matching. This is essentially the same derivation of the Conley complex as that in~\Cref{thm:main}, since the theorem that we rely on (\Cref{thm:skoldberg}) from~\cite{Skoldberg2006MorseViewpoint, Skoldberg2018} is itself a consequence of the homological perturbation theory (\Cref{lemma:perturbation}) which we apply in \Cref{ssec:relhomperturb} to arrive at a proof of~\Cref{thm:main}. Nonetheless, we present this perspective as it gives a conceptual link to discrete Morse theory, in particular the approach in~\cite{harker2021} where instead of using an algebraic Morse matching, they obtain the connection matrix via applying acyclic partial matchings on the underlying cell complex iteratively. 

Following the outline of algebraic Morse theory in~\Cref{ssec:AMT}, we first describe a $P$-graded chain complex, equipped with homology decompositions of relative homology groups $C^p$, as a based complex; this builds on our previous example~\Cref{ex:homology_decomposition_based} for an `ungraded' chain complex equipped with a homology decomposition. 

\begin{example} \label{ex:Pfiltbased}
    For a $P$-graded chain complex $C$ with a homology decomposition $\zeta^p_n: C^p_n \xrightarrow{\cong} H^p_n \oplus B^p_n \oplus K^p_n$ of relative chain groups, we can express $C$ as a based complex by writing the chain groups as 
    \[C_n =  \bigoplus_{p \in P} C^p_n \cong  \bigoplus_{p \in P} H^p_n \oplus B^p_n \oplus K^p_n.\] 
    In this case, the index set for each dimension is $I_n =P \times \{H_n, B_n,K_n
    \} $, if we write $C_n^{(p, V)} := V_n^p$ for $V \in \{H, B,K
    \}$. We can write the projection and inclusion maps as the composition 
    \begin{align*}
        \jmath_n^{(p,V)} &: C_n \overset{\jmath^p_n}{\twoheadrightarrow} C^p_n \xrightarrow[\cong]{\zeta^p_n} H^p_n \oplus B^p_n \oplus K^p_n \twoheadrightarrow V^p_n  \\ \imath_n^{(p,V)}&: V^p_n \hookrightarrow H^p_n \oplus B^p_n \oplus K^p_n \xrightarrow[\cong]{(\zeta^p_n)^{-1}}C^p_n  \xhookrightarrow{\imath^p_n} C_n .
    \end{align*}
    For a pair of poset elements $p \leq q$ and $V,W \in \{H, B, K\}$, the homology decomposition of relative chain groups leads to the following restricted differentials:
\[
\begin{tikzcd}[column sep=1.5cm,
/tikz/column 2/.style={column sep=-0.5em},
/tikz/column 3/.style={column sep=-0.5em},
/tikz/column 4/.style={column sep=-0.5em},
/tikz/column 5/.style={column sep=-0.5em}]
    V^q_n \ar[d, "d_n^{((p,W)(q,V))}", swap] 
    \ar[r, hookrightarrow, "\imath_n^{(q,V)}"] 
    &
    C_n \ar[d, "d"]
    \\
    W^p_{n-1} &
    C_{n-1} \ar[l,twoheadrightarrow, "\jmath^{(p,W)}_{n-1}"] 
\end{tikzcd}
\]
In particular, consider the case where $p=q$. 
Given $V,W \in \{H,B,K\}$, we can write  
\begin{equation*}
    d_n^{((p,W)(p,V))}: V^p_n \hookrightarrow H^p_n \oplus B^p_n \oplus K^p_n \xrightarrow[\cong]{(\zeta^p_n)^{-1}} C^p_n \xrightarrow{t^{p}_n} C^p_{n-1}  \xrightarrow[\cong]{\zeta^p_{n-1}} H^p_{n-1} \oplus B^p_{n-1} \oplus K^p_{n-1} \twoheadrightarrow W^p_{n-1}.
\end{equation*}
From \Cref{ex:homology_decomposition_based}, 
we deduce that $d_n^{((p,W)(p,V))}\neq 0$ only 
when $V = K$ and $W = B$, in which case $d_n^{((p,B)(p,K))}: K_n^p = B_{n-1}^p$ is the identity map.
\end{example}

Having formulated a $P$-graded chain complex, equipped with homology decompositions of relative chains, as a based complex, we can now define a Morse matching, generalising the ungraded case in~\Cref{ex:homology_decomposition_based_2}.

\begin{lemma} \label{lem:morse_matching_connection_matrix}
    Assume $P$ is a well-founded poset. Given a $P$-graded chain complex of vector spaces $(C,d)$ with homology decomposition of relative chains $\zeta^p$, consider its expression as a based complex with index set $I_n = P \times \{H_n, B_n,K_n\}$ (\Cref{ex:Pfiltbased}):
    \[C_n \cong \bigoplus_p H^p_n \oplus B^p_n \oplus K^p_n.\]
    Then the set of directed edges $\sM = \{(p,K_n) \rightharpoonup (p, B_n) \ : \ p \in P, n \in \Z\}$ is a Morse matching, and the critical set of $\sM$ is given in each dimension $n$ by $\sM^0_n = \{ (p,H_n) \ : \ p \in P\}$ (i.e. $\widetilde{C}_n = \bigoplus_{p \in P} H_n(C^p)$). 
\end{lemma}
\begin{proof}
We check that $\sM$ satisfies the two conditions for a Morse matching as described in \Cref{def:Morse-matching}. 
First, we recall from \Cref{ex:Pfiltbased} that for all $(p,K_n) \rightharpoonup (p, B_{n-1}) \in \sM$ there is an isomorphism
\[
d^{((p,B)(p,K))}_n = \jmath^{(p,B)}_n d_n \imath^{(p,K)}_n: K_n^p \xrightarrow{=} B^p_{n-1}  \ .
\]
Thus the first condition is satisfied. 
For the second condition, we need to find a partial order $\preceq_n$ on $I_n$, such that the source and targets of length two directed paths on $G(C)^\sM$ are ordered by $\preceq_n$. We first note that all directed edges in $G(C)^\sM$ are either of the form $(p, B_{n-1})\rightharpoondown (p,K_n)$ or $(q,V_n) \rightharpoonup (r,W_{n-1})$ for $q\gneq r$. Hence, all length two directed paths on $G(C)^\sM$ with distinct source and target in $I_n$ are of either of the following forms, for $p \lneq q$:
\begin{align*}
    (q, V_n) &\rightharpoonup (p, B_{n-1}) \rightharpoondown (p, K_{n}) \qc{or}\\
    (q, B_n) &\rightharpoondown (q, K_{n+1}) \rightharpoonup (p, W_{n}).
\end{align*}
% TODO: One of the W's needs to be a V, I think
Consider then the following partial order: $(p, W_n) \preceq_n (q,V_n)$ only if they are identical, or $p \lneq q$.
Since $\leq$ is a partial order on $P$, it is straightforward to check that $\preceq_n$ is a partial order on $I_n$, i.e. reflexive, anti-symmetric, and transitive. We can also verify by inspection that the source and target of any length two path is ordered by $\precneq_n$. The partial order $\preceq_n$ is also well-founded: any descending sequence of unique elements in $I_n$ must decrease its $P$-grading, and the corresponding descending sequence of unique poset elements must be finite by $P$ being well-founded. Having satisfied both conditions in~\Cref{def:Morse-matching}, we have hence shown that $\sM$ is a Morse matching.

For the final claim, since all the relative pre-boundaries and boundaries are matched by $\sM$, what remains are the relative homology groups $H^p_n$, and thus $\widetilde{C}_n= \bigoplus_{p \in P} H^p_n$.
\end{proof}
\begin{remark}
    Notice that, if $P$ is a finite poset, then \Cref{lemma:morse-cycles-matching} applies and one can alternatively show that $\sM$ is a Morse matching by the fact that $G(C)^\sM$ is acyclic.
\end{remark}
We can now consider the contraction of $(C,d)$ induced by the Morse matching in~\Cref{lem:morse_matching_connection_matrix}, according to~\Cref{thm:skoldberg}. The proposition below states that the Morse matching due to homology decompositions of relative chains induces the same contraction as that in \Cref{thm:main}. Thus this Morse matching yields a compression of a $P$-graded chain complex into its Conley complex, analogous to how acyclic partial matchings on a cell complex reduces the cell complex to an algebraic Morse complex. 

\begin{proposition} \label{prop:equivalence}
    Assume $P$ is a well-founded poset, and $(C,d)$ is a $P$-graded chain complex of vector spaces with homology decomposition of relative chains $\zeta^p$. Given the Morse matching $\sM$ on $(C,d)$ in \Cref{lem:morse_matching_connection_matrix}, the corresponding contraction of $(C,d)$ as a consequence of \Cref{thm:skoldberg} is identical to the $P$-filtered contraction as described in \Cref{thm:main}. 
\end{proposition}
\begin{proof}
We recall that the contraction in \Cref{thm:skoldberg} was derived by perturbing the contraction determined by a Morse matching $\sM$ (\cref{eq:based_elementary_contr_maps,eq:basecontraction_morsematching})
\begin{equation} \label{eq:contraction_star}
    \begin{tikzcd}[ampersand replacement=\&]
    (\widetilde{C}, 0) \ar[r, "g", shift right=0.3em, swap] \&
    (C,d-\tau) 
    \arrow[l, "f", shift right=0.3em, swap]
     \arrow["h", from=1-2, to=1-2, loop, in=345, out=15, distance=10mm]
\end{tikzcd},\tag{$\ast$}
\end{equation}
by $\tau$. To prove the statement in the proposition, it suffices to show two things: that 
\eqref{eq:contraction_star} is identical to the `pre-perturbation' contraction (\cref{eq:Pgraded_elementary_contraction,eq:abc_proxy}) as employed in the proof of \Cref{thm:main} in \Cref{ssec:relhomperturb}
\begin{equation} \label{eq:contraction_dagger}
    \begin{tikzcd}[ampersand replacement=\&]
    (\oC, 0) \ar[r, "\beta", shift right=0.3em, swap] \&
    (C, d-\delta)
    \arrow[l, "\alpha", shift right=0.3em, swap]
    \arrow["\gamma", from=1-2, to=1-2, loop, in=345, out=15, distance=10mm]
\end{tikzcd};\tag{$\dagger$}
\end{equation}
 and the respective perturbations $\tau, \delta$ are identical. Then, the contraction of the full complex $(C,d)$ obtained by perturbing \eqref{eq:contraction_star} by $\tau$ is identical to that obtained in \Cref{thm:main} by perturbing \eqref{eq:contraction_dagger} by $\delta$.

 We have shown in \Cref{lem:morse_matching_connection_matrix} that $\widetilde{C} = \oC$ as (dimension) graded vector spaces, and thus $(\widetilde{C},0) = (\oC,0)$  as chain complexes; all that remains is to show that the perturbations and the maps in the contraction are identical. In other words, that is, $\tau = \delta$, and  $(f,g,h) = (\alpha, \beta, \gamma)$.  Let us construct the maps $f,g,h$ and $\tau$ following the prescription in~\cref{eq:based_elementary_contr_maps}. First, the maps $f,g$ are simply the projection and inclusion maps $\alpha, \beta$ in \cref{eq:abc_proxy}. Furthermore, the map $h$ in \cref{eq:based_elementary_contr_maps} is equivalent to $\gamma$ in \cref{eq:abc_proxy} by the observation in \Cref{ex:homology_decomposition_based_4} that in the ungraded case, $\imath^{(K)}_{n+1} \big(d^{(BK)}_{n+1} \big)^{-1}\jmath^{(B)}_n = -\gamma_n$:
\begin{align*}
    h_n &= - \sum_{p \in P} \imath^{(p,K)}_{n+1} \big(d^{((p,B) (p,K))}_{n+1} \big)^{-1}\jmath^{(p,B)}_n = \sum_{p \in P} \imath^p_{n+1} \gamma^p_n  \jmath^p_n = \gamma_n.
\end{align*}
Then, the homology decomposition (\cref{eq:homology-decomposition}) also implies 
\begin{align*}
    \tau &= \sum_{(b\rightharpoonup a) \notin \sM}\imath^{(a)} \circ d^{(ab)} 
    \circ \jmath^{(b)}  = d - \sum_{(b\rightharpoonup a) \in \sM}\imath^{(a)} \circ d^{(ab)} 
    \circ \jmath^{(b)} = d- \sum_{p \in P } \imath^{(p,B)} d^{((p,B) (p,K))} \jmath^{(p,K)} \\
    &= d - \sum_{p \in P}\imath^p t^p \jmath^p = \sum_{p \lneq q}\imath^p t^{pq} \jmath^q = \delta. 
\end{align*}
The fourth equality follows from the description of $d^{((p,B) (p,K))}$ in~\Cref{ex:Pfiltbased}.
\end{proof}

\label{subsec:contraction-via-perturbation}
\section{Connection Matrix Algorithm}\label{sec:connection-matrix-algorithm}

As we have seen in~\cref{thm:main} choosing homology decompositions for each poset grade $\zeta^p_n\colon C^p_n \cong H^p_n \oplus B^p_n \oplus K^p_n$ gives rise to a $P$-filtered contraction between $(C,d)$ and $(\oC, \od)$.
A matrix representation of $\od$ then leads to a connection matrix $\Delta$. 
In this section, we develop algorithms to find a suitable splitting and  compute connection matrices. 
The full connection matrix algorithm is presented in~\Cref{alg:connection-matrix}. 
The algorithm first involves a routine (\Cref{alg:separating-bases}) that effects the homology decomposition $\zeta_p$ of relative chain groups $C^p$ in a computationally favourable basis, which we call a separating basis (\Cref{def:sep_basis}). We give an account of this in~\Cref{subsec:splittings-clearing}. We then give a description of the connection matrix $\Delta$ for $\od$ in the separating bases in~\Cref{subsec:conley_formulae}; in particular, we describe the matrix entries $\Delta_{ij}$ explicitly in~\Cref{cor:entry-connection}. Finally, we then describe in~\Cref{alg:connection-matrix} how the separating bases can be leveraged to perform a sequence of row and column operations that reduces a boundary matrix representation of $d$ in any basis to a connection matrix $\Delta$. We prove in~\Cref{prop:connection-matrix-algo} the correctness of the algorithm: $\Delta$ as the output of~\Cref{alg:connection-matrix} represents the Conley complex differential $\od$ in a basis consisting of relative homology cycles at each poset grade. 

Henceforth, we assume all chain complexes are finite dimensional (and hence bounded in dimension).
\begin{remark}
    In the next discussion on matrix algorithms, we make use of the following notation:
    \begin{itemize}
        \item We use fraktur notation $\fA$ to denote an explicit basis of a vector (sub)space. Given a basis $\fA$, we let $\langle \fA \rangle$ denote the vector subspace spanned by $\fA$.
        \item  If $M$ is a matrix representing a linear map $f: V \to W$ in some explicit bases $\mathfrak{V}$, $\mathfrak{W}$ for $V,W$ respectively, we write $c(M)$ for the %\sout{indices of the} 
        set of nonzero columns of $M$; that is, those basis vectors $u \in \mathfrak{V}$ such that $f(u) \neq 0$.
        \item We implicitly assume a total ordering of basis vectors for any given basis of a vector space, and index all matrices with that total ordering. For any vector $v=\sum_{u \in \fA}v_u u$ written in a basis $\fA$ with coefficients $v_u$, we denote the \emph{pivot} $\rho(v)$ to be the maximal $u \in \fA$ such that $v_u \neq 0$. Given a subset of vectors~$W$, we use the notation $\rho(W)=\{\rho(v) \mid v \in W\}$ to denote the associated collection of pivots.
        \item For a chain complex with differential $d_n$, we use $D_n$ to denote its matrix representation with respect to a chosen set of bases $\{\fA_i\}$ of chain groups $C_i$.  
    \end{itemize}
\end{remark}

%%%%%%%%%%%%%%%%%%%%%%%%%%%%%%%%%%%%%%%%%%%%%%%%%%%%%%%%%%%%%%%%
\subsection{Splittings via the clearing optimisation}\label{subsec:splittings-clearing}
%%%%%%%%%%%%%%%%%%%%%%%%%%%%%%%%%%%%%%%%%%%%%%%%%%%%%%%%%%%%%%%%
Suppose we are given boundary matrices $D_n$ representing $d_n$ in bases $\fA_n$ of $C_n$.
In this section, we describe an algorithm that computes split isomorphisms 
$
\zeta_n: C_n\cong H_n\oplus B_n \oplus K_n
$
(\cref{eq:homology-decomposition}) for all $n$. 
In other words, we seek coordinate transformations from an initial basis $\fA_n$ of $C_n$ to another basis $\fH_n\sqcup \fB_n \sqcup \fK_n$ of $C_n$, such that we can write the differential in the new basis as a block matrix 
\[
\begin{tikzpicture}[scale=1]

% Draw the outer 3x3 grid with dashed lines
\draw[dashed] (0,0) rectangle (3,3);

% Internal dashed grid lines (vertical)
\draw[dashed] (1,0) -- (1,3);
\draw[dashed] (2,0) -- (2,3);

% Internal dashed grid lines (horizontal)
\draw[dashed] (0,1) -- (3,1);
\draw[dashed] (0,2) -- (3,2);

% Labels on top
\node at (0.5,3.3) {$\fH_n$};
\node at (1.5,3.3) {$\fB_n$};
\node at (2.5,3.3) {$\fK_n$};

% Labels on left
\node at (-0.5,2.5) {$\fH_{n-1}$};
\node at (-0.5,1.5) {$\fB_{n-1}$};
\node at (-0.5,0.5) {$\fK_{n-1}$};

% Bottom-right block contents
\node at (2.5,1.5) {$I$};
\node at (-1.5, 1.5) {$d_n = $};
\end{tikzpicture}.
\]
More formally, we desire a basis $\fH_n\sqcup \fB_n \sqcup \fK_n$ for each $C_n$, such that:
\begin{enumerate}
    \item $\fB_n$ is a basis of boundary chains: $\B_n := \langle \fB_n \rangle = \ima (d) =: B_{n-1}$;
    \item $\fH_n$ is a homology basis: $\ker d = \langle \fH_n \sqcup \fB_n \rangle$; in other words, $\cH_n := \langle \fH_n \rangle  \cong H_n$;
    \item $\fK_n$ is a pre-boundary basis: for $\fK_n = \{u_1,\ldots, u_m\}$, we have $\fB_{n-1} = \{d(u_1),\ldots, d(u_m)\}$. This implies $d$ restricted to $\cK_n := \langle \fK_n \rangle$ is an isomorphism from $\cK_n$ to $\B_{n-1} = B_{n-1} =: K_n$. 
\end{enumerate}

Next, we present an algorithm that obtains a `separating basis' (see \Cref{def:sep_basis}), which enhances the efficiency in the subsequent computations in the main connection matrix algorithm (\Cref{lem:properties-separating-bases,prop:canonical-differential}).

Our approach is based on `killing reduction'~\cite{Chen2011}, which we call \emph{clearing optimisation} following~\cite{Bauer2014ClearChunks}. 
We refer the reader to those references for the correctness of the algorithm, and give a description here.  
The algorithm, detailed in~\cref{alg:separating-bases}, proceeds sequentially starting from the top dimensional boundary matrices. We give an outline of the algorithm:
\begin{enumerate}
    \item At each step, the algorithm performs a column reduction $R_n = D_n T_n$ of $D_n$ by means of the clearing optimisation, assuming that $D_{n+1}$ has also been similarly reduced in the previous step, and leveraging data from $R_{n+1}$ to avoid unnecessary column operations.
    \item From the reduction, we then read off the bases $\fH_n, \fK_n \subset C_n$ from columns of $T_n$ and  $\fB_{n-1} \subset C_{n-1}$ from columns of $R_n$.
\end{enumerate}
Since we assume that $C$ is bounded, there exists a largest integer $N >0$ such that $D_N\neq 0$. Because $D_{N+1}=0$, any reduction of $D_{N+1}$ must be trivial and $R_{N+1}=0$. If all differentials are trivial, the splitting is immediate.
In each reduction step, the clearing optimisation procedure~(\cref{alg:clear-reduce}) ensures $R_n$ is column reduced,  
$T_n$ is square upper diagonal with ones down the diagonal, and $c(R_{n+1}) \subset c(T_n)$. 
In particular, the columns from $T_n$ are linearly independent and a basis for $C_n$. 
The desired bases are then obtained by the following partition of column vectors $c(T_n) = \fH_n \sqcup \fB_n \sqcup \fK_n$:
\begin{align}
\fK_n & = \{ v  \in c(T_n) \mbox{ such that } D_n v \neq 0\}; \\
   \fH_n & = \{ v  \in c(T_n) \setminus c(R_{n+1}) \mbox{ such that } D_n v = 0 \}; \quad \text{and}\\
   \fB_{n} & =c(R_{n+1}).
\end{align}
Note that $d(\fK_n) = c(R_n) = \fB_{n-1}$, and thus $d$ restricted to $\langle \fK_n \rangle$ is an isomorphism onto $\langle c(R_n) \rangle = \langle \fB_{n-1} \rangle$; the non-zero columns of $R_{n+1}$, i.e. $\fB_n$ is a basis for $B_n$; and $\ker d = \langle \fH_n \sqcup \fB_n \rangle$. It follows from the properties above that this choice of basis gives an explicit matrix representation of the splitting isomorphisms \cref{eq:homology-decomposition}:
\begin{equation}\label{eq:direct-sum-split-bases}
\zeta_n: C_n = \langle c(T_n)\rangle = 
\langle \fH_n\rangle \oplus \langle \fB_n\rangle \oplus \langle \fK_n\rangle = 
\cH_n \oplus \B_n \oplus \cK_n \xrightarrow{\cong} H_n \oplus B_n \oplus K_n
.
\end{equation}

\Cref{alg:clear-reduce} is presented for a matrix $D$ and a set $S$, which, in our present case, would correspond to the pair $(D_n,c(R_{n+1}))$. 
The clearing optimisation reduces column operations by `pre-computing' the columns of $T_n$ with information from $R_{n+1}$. 
Basically, each column $b \in c(R_{n+1})$ gives the $\rho(b)$ column in $T_n$. In addition, for each $b \in \fB_n$, since $D_nb=0$, one can set the $\rho(b)$-column from $D_n$ to zero, before proceeding with the usual column reduction. Thus, this procedure enforces $c(R_{n+1})\subset c(T_n)$. 

\begin{algorithm}
\caption{\texttt{clear\_reduce}}\label{alg:clear-reduce}
\KwData{\begin{enumerate} 
\item Matrix $D$ of size $M\times N$; and 
\item a set $S$ of linearly independent vectors such that $\langle S\rangle \subset \ker(D)$.
\end{enumerate}}
\KwResult{Matrices $R, T$ with $R = D T$, $R$ column reduced, $T$ upper-triangular and $S\subset c(T)$.}
 $T \gets \id_{N}$ \;
 $R \gets D$ \;
 \For{$b \in S$}{
    Set $\rho(b)$-column from $T$ equal to $b$ \;
    Set $\rho(b)$-column from $R$ equal to $0$ \;
 }
 \For{$i \gets M$ \textbf{down to} $1$}{
   $Q \gets  
   \{1 \leq j \leq N \mid \rho(R[j])=i\}$ \;
   \If{$Q\neq \emptyset$}{
   $m \gets \min (Q)$ \;
   \For{$j\in Q\setminus\{m\}$}{
        $a \gets R[i,j]  / R[i,m]$ \;
        $R[:,j] \gets R[:,j] - a\cdot R_n[:,m]$ \;
        $T[:,j] \gets T[:,j] - a\cdot T[:,m]$ \;
    }
   }
 }
\Return $R, T$
\end{algorithm}

\begin{algorithm}
\caption{\texttt{separating\_via\_clearing}}\label{alg:separating-bases}
\KwData{Sequence of matrices $(D_n)_{n=0}^N$ with $D_i D_{i+1}=0$ for all $0 \leq i < N$ and $D_0=0$.}
\KwResult{Separating bases $((\fH_n, \fB_n, \fK_n))_{n=0}^N$, matrices $(T_n)_{n=0}^N$ returned by \texttt{clear\_reduce} }
 $\fB_{N} \gets \emptyset$\;
 \For{$n \gets N$ \textbf{ down to } $0$}{
  $R_n, T_n \gets \texttt{clear\_reduce}(D_n, \fB_n)$ \;
  $\fH_n \gets \{ v \in c(T_n) \setminus \fB_n \mbox{ such that } D_nv=0\}$ \;
  \If{$n>0$}{
   $\fK_n \gets \{ v  \in c(T_n) \mbox{ such that } D_n v \neq 0\}$ \;
   $\fB_{n-1} \gets c(R_n)$ \;
   }
}
$\fK_0 \gets \emptyset$ \;
\Return $((\fH_n, \fB_n, \fK_n))_{n=0}^N$, $(T_n)_{n=0}^N$ 
\end{algorithm}
Furthermore, in \Cref{lem:separating-basis}, we see that $(\fH_n, \fB_n, \fK_n)$ is a separating basis for $C_n$ in the sense of the following definition:
\begin{definition} \label{def:sep_basis}
The triple $(\fH_n, \fB_n, \fK_n)$ forms a \emph{separating basis} for $C_n$ (with respect to $\fA_n$) if the following conditions are satisfied:
\begin{enumerate}[label=(S\arabic*)]
    \item\label[condition]{item:separating-first} for all elements $z \in \fH_n$ and all $w \in \fK_n$ it follows that $z_\beta =w_\beta=0$ for all $\beta \in \rho(\fB_n)$, 
    \item\label[condition]{item:separating-second} for fixed $z \in \fH_n$, it follows that, for all $v \in \fH_n \setminus \{z\}$ and all $w \in \fK_n$, 
    $v_{\rho(z)}=w_{\rho(z)}=0$, and
    \item\label[condition]{item:separating-ones} for all $z \in \fH_n$ we have $Z_{\rho(z)}%h_{\rho(z)}
    =1_k$.
\end{enumerate}
\end{definition}
Altogether, given a sequence of differentials $(D_n)_{n=0}^N$, one obtains, by the aforementioned procedure, a sequence of separating bases $((\fH_n, \fB_n, \fK_n))_{n=0}^N$; this is outlined in \Cref{alg:separating-bases}. 

\begin{lemma}\label{lem:separating-basis}
    $(\fH_n, \fB_n, \fK_n)$ is a separating basis for $C_n$ (w.r.t. $\fA_n$).
\end{lemma}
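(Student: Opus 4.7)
The plan is to start with the bases $(\fH_n, \fB_n, \fP_n)$ produced by the clearing reduction and to perform successive Gauss--Jordan-style column operations to enforce the three separating conditions. Because $T_n$ is upper triangular with non-zero diagonal, the elements of $c(T_n) = \fH_n \sqcup \fB_n \sqcup \fP_n$ already have pairwise distinct pivots, and by \cref{prop:splittings-finite-dimensional} the partition they induce matches the subspace decomposition $C_n = \cF_n \oplus \B_n \oplus \cP_n$. All operations below are column operations; viewed subspacewise, they either preserve the summands $\B_n, \cF_n, \cP_n$ or merely reselect a complement, so after each stage we still obtain a basis of $C_n$ partitioned as a putative $(\fH_n, \fB_n, \fP_n)$.

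First, I would reduce $\fB_n$ to reduced column echelon form within itself, so that each $b \in \fB_n$ satisfies $b_{\rho(b)} = 1_k$ and $b_\beta = 0$ for every other $\beta \in \rho(\fB_n)$. This is possible because the pivots of $\fB_n$ are distinct and only changes the internal basis of $\B_n$. To achieve condition (\ref{item:separating-first}), I would then replace each $v \in \fH_n \cup \fP_n$ by $v - \sum_{b \in \fB_n} v_{\rho(b)}\, b$; the previous normalisation guarantees that the updated $v$ vanishes at every $\beta \in \rho(\fB_n)$. Since each $b \in \fB_n$ lies in $\ker d_n$, the image $d_n v$ is unchanged, so the elements of $\fH_n$ stay cycles and the elements of $\fP_n$ still map into $\fB_{n-1}$.

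Next, to obtain conditions (\ref{item:separating-second}) and (\ref{item:separating-ones}) for $\fH_n$, I would apply reduced column echelon form within the updated $\fH_n$. This chooses pivots $\rho(\fH_n) \subset \fA_n \setminus \rho(\fB_n)$ (necessarily outside $\rho(\fB_n)$ by the previous step) and normalises so that $z_{\rho(z)} = 1_k$ and $v_{\rho(z)} = 0$ for $v \in \fH_n \setminus \{z\}$. As these are column operations among the $\fH_n$ vectors, they preserve both $\langle \fH_n \rangle$ and the zero entries on $\rho(\fB_n)$ just produced. Finally, to complete condition (\ref{item:separating-second}) for $\fP_n$, I would set $w \mapsto w - \sum_{z \in \fH_n} w_{\rho(z)}\, z$ for every $w \in \fP_n$; because the $\fH_n$ vectors now vanish both on $\rho(\fB_n)$ and on the other $\fH_n$ pivots, this zeroes out $w_{\rho(z)}$ for every $z$ without reintroducing entries at $\rho(\fB_n)$, and since $z \in \ker d_n$, $d_n w$ is left unchanged.

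The delicate point is bookkeeping across the four stages: one must verify that each stage preserves the invariants established by its predecessors. This reduces to three observations: (i) subtracting cycles from cycles yields cycles; (ii) subtracting cycles from $\fP_n$ elements leaves the differential $d_n$ untouched, so the condition $d_n w \in \fB_{n-1}$ persists; and (iii) at each stage the column operations target pivot positions that are disjoint from those frozen by later stages. Beyond these checks, the argument is a coordinated instance of standard reduced column echelon form applied to the three blocks of the basis.
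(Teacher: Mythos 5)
Your proof is correct as an existence argument, but it takes a genuinely different route from the paper's. The paper's proof is observational: it shows that the clearing-optimised left-to-right column reduction from \cref{prop:splittings-finite-dimensional} \emph{already} produces a separating basis, with no further work. The key observations are that (a) the columns of $D_n$ indexed by $\rho(\fB_n)$ are zeroed by the clearing and hence are never used as reducers, so the rows $\rho(\fB_n)$ of $T_n$ stay supported only on the diagonal (condition~(\ref{item:separating-first})); (b) each column of $D_n$ indexed by $\rho(z)$ for $z \in \fH_n$ reduces to zero and is therefore also never a reducer, so the rows $\rho(\fH_n)$ of $T_n$ likewise stay supported only on the diagonal (condition~(\ref{item:separating-second})); and (c) the left-to-right reduction never rescales the target column, so the diagonal of $T_n$ remains~$1_k$ (condition~(\ref{item:separating-ones})). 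You instead treat the $T_n$ from \cref{prop:splittings-finite-dimensional} as a black box and repair it with three rounds of reduced-column-echelon operations: on the $\fB_n$ block, then clearing the $\rho(\fB_n)$ coordinates out of $\fH_n \cup \fP_n$, then clearing $\rho(\fH_n)$ coordinates within $\fH_n$ and out of $\fP_n$. Your bookkeeping (pivots preserved, subtractions of cycles do not disturb the $d_n$-images) is sound, so this is a valid constructive existence proof; it has the mild advantage of not depending on the precise left-to-right scheduling of the Gaussian reduction. However, it proves strictly less than the paper needs downstream: Step~(\ref{algo:relative-reduction}) of the algorithm in \cref{subsec:algorithm} asserts that the clearing optimisation \emph{alone} yields separating bases, and the sentence following \cref{lem:separating-basis} makes this explicit. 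Your argument would suggest that extra post-processing column operations may be required after the clearing, and it does not establish that they are in fact unnecessary (on the actual clearing output your four stages are identity operations, but you never argue this).
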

\begin{proof}
   Recall the description of the clearing optimisation outlined in \Cref{alg:clear-reduce}. 
    In particular, recall that all columns from $D_n$ indexed by $\rho(\fB_n)$ are set to zero before proceeding with the column reductions. 
    One can then assume that \Cref{item:separating-first} is satisfied, since performing column operations in $D_n$, all columns from $\rho(\fB_n)$ were zero. 
    We can even refine further.
    Indeed, since each $z \in \fH_n$ is such that $D_nz=0$, then, if we followed the reduction of $D_n$ using \Cref{alg:clear-reduce}, one never adds the column labelled by $\rho(z)$ from $D_n$ to columns on its right, since it becomes trivial. 
    Thus, \Cref{item:separating-second} follows.
    In addition, when performing left-to right column additions in \Cref{alg:clear-reduce}, the target column is not multiplied by any scalar, and so \Cref{item:separating-ones} automatically holds.
\end{proof}

For convenience, for any $V \in \{ \cH, \B, \cK\}$ we write $V\coloneqq\bigoplus_{n\in \Z} V_n$.
Similarly, given $\fD \in \{ \fH, \fB, \fK\}$, we write $\fD\coloneqq \bigsqcup_{n\in \Z} \fD_n$ and $\zeta(\fD) = \{ \zeta(z) \colon z \in \fD\}$.
By construction, \Cref{alg:separating-bases} leads to separating bases $((\fH_n, \fB_n, \fK_n))_{n=0}^N$ for $C_n$ (w.r.t. $\fA_n$) for all $0 \leq n \leq N+1$.
Choosing trivial bases for other degrees, we obtain a separating basis $(\fH, \fB, \fK)$ for $C$ (w.r.t. $\fA$).

Separating bases have properties that render them computationally convenient: 
%To present these properties, 
given a subset of basis vectors $\fD \subset \fA$, we define the projection $\jmath_{\fD}\colon C \rightarrow \langle \fD\rangle$ which is given by sending 
$
v=\sum_{\alpha\in \fA} v_\alpha \alpha 
$
to 
$
\sum_{\alpha\in \fD} v_\alpha \alpha,
$
for all $v \in C$.
In addition, we also consider projections (not necessarily chain maps) $\jmath_V\colon C \rightarrow V$ for $V\in \{H, B, K\}$, derived from the isomorphisms $\zeta\colon C\cong H \oplus B \oplus K$ (\cref{eq:direct-sum-split-bases}). 
Lastly, we introduce the map $\theta \colon \langle \rho(\fH)\rangle \rightarrow H$ given by $\theta(\rho(z)) = \zeta(z)$ for all $z \in \fH$; this is an isomorphism since the pivots from $\fH$ are all distinct.
\begin{lemma}\label{lem:properties-separating-bases}
    Let $(\fH, \fB, \fK)$ be a separating basis for $C$.
    Then, 
    \begin{enumerate}[label=(\roman*)]
        \item\label[property]{property:separating-base-first} $\ker(\jmath_{\rho(\fB)})=\ker(\jmath_B) = \langle \fA \setminus \rho(\fB)\rangle$, and  
        \item\label[property]{property:separating-base-second} $(\jmath_{H})_{\mid \cH \oplus \cK}= (\theta \jmath_{\rho(\fH)})_{\mid \cH \oplus \cK}$.
    \end{enumerate}
\end{lemma}
\begin{proof}
    First, suppose that $v \in C$ is such that $\jmath_{\rho(\fB)}(v)=0$; in particular, $v \in \langle \fA \setminus \rho(\fB)\rangle$.
    On the other hand, \Cref{item:separating-first} implies that $\langle \fH\sqcup \fK\rangle \subset \langle \fA \setminus \rho(\fB)\rangle$.
    Which, together with 
    \[
    \fA \setminus \rho(\fB) = \rho\big(\mbox{$\bigsqcup_{n\in \Z} c(T_n)\big) \setminus \rho(\fB)$} = 
    \mbox{$\bigsqcup_{n\in \Z}\rho(c(T_n)\setminus \fB_n)$} = 
    \mbox{$\bigsqcup_{n\in \Z}\rho(\fH_n \sqcup \fK_n)$}  = 
    \rho(\fH \sqcup \fK),
    \]
    implies that $\langle \fH\sqcup \fK\rangle = \langle \fA \setminus \rho(\fB)\rangle$. Hence $v \in \langle \fA \setminus \rho(\fB)\rangle=\cH \oplus \cK$ and so, by \cref{eq:direct-sum-split-bases}, it follows that $\jmath_{B}(v)=0$. 
    Now, suppose that $v \in C$ is such that $\jmath_{B}(v)=0$.
    Then $v \in \cH\oplus \cK=\langle \fA \setminus \rho(\fB)\rangle$, and it follows that $\jmath_{\rho(\fB)}(v)=0$, so that \Cref{property:separating-base-first} holds. 

    Let us now show \Cref{property:separating-base-second}.
    For this, recall that, given $v \in \cH\oplus \cK$, one can write 
    $v=\sum_{z \in \fH}v_z z + \sum_{w \in \fK}v_w w$ for unique coefficients $v_z,v_w \in k$ for all $z \in \fH$ and $w \in \fK$.
    In particular, it follows that
    \[
    \jmath_{H}(v) = 
    \jmath_{H}\left( \sum_{z \in \fH}v_z z + \sum_{w \in \fK}v_w w\right) =
    \sum_{z \in \fH}v_z \zeta(z),
    \]
    where we have used $\jmath_{H}(z)=\zeta(z)$ for all $z \in \fH$ and $\jmath_{H}(w)=0$ for all $w \in \fK$.
    Similarly, one obtains 
    \[
    \theta\jmath_{\rho(\fH)}(v) = 
    \theta\jmath_{\rho(\fH)}\left( \sum_{z \in \fH}v_z z + \sum_{w \in \fK}v_w w\right) =
    \theta\left( \sum_{z \in \fH}v_z \rho(z)\right) =
    \sum_{z \in \fH}v_z \zeta(z).
    \]
    where we have used $\jmath_{\rho(\fH)}(z)=\rho(z)$ for all $z \in \fH$ and $\jmath_{\rho(\fH)}(w)=0$ for all $w \in \fK$, which, in turn, follow from \Cref{item:separating-second} and \Cref{item:separating-ones} of separating bases.
    Thus, \Cref{property:separating-base-second} holds.
\end{proof}

%%%%%%%%%%%%%%%%%%%%%%%%%%%%%%%%%%%%%%%%%%%%%%%%%%%%%%%%%%%%%%%%
\subsection{The Conley differential and the Connection matrix under a Separating Bases}\label{subsec:conley_formulae}

In this subsection, we deduce a technical result concerning the Conley Complex for a given choice of separating bases for $C^p$ for all $p\in P$. 
Namely, in \Cref{prop:canonical-differential} we provide a formula for the Conley differential. 
This allows to explicitly express the entries of the Connection matrix for a particular choice of separating bases; see \Cref{cor:entry-connection}.

To start, we consider separating bases taken at the relative level; i.e. there exist separating bases $(\fH^p_n, \fB^p_n, \fK^p_n)$ such that there are relative chain decompositions  $C^p_n=\cH^p_n\oplus \B^p_n \oplus \cK^p_n$ with $\cH^p_n=\langle \fH^p_n\rangle$, $\B^p_n=\langle \fB^p_n\rangle$ and $\cK^p_n=\langle \fK^p_n\rangle$ for all $p \in P$ and $n\in \Z$, as in \cref{eq:direct-sum-split-bases}.

\begin{remark}\label{rmk:notation_2}
    For convenience, for any $V \in \{ \cH, \B, \cK\}$ we write $V\coloneqq\bigoplus_{p\in P, n\in \Z} V^p_n$.
    Similarly, given $\fD \in \{ \fH, \fB, \fK\}$, we write $\fD\coloneqq \bigsqcup_{p \in P, n\in \Z} \fD_n^p$ and 
    $\zeta(\fD) = \bigsqcup_{p \in P, n\in \Z} \zeta^p_n(\fD_n^p)$.
    Altogether, we have the following decomposition for $C$:
    \[
        C = \cH \oplus \B \oplus \cK = \langle\fH\rangle \oplus \langle \fB \rangle \oplus \langle \fK \rangle \cong H \oplus B \oplus K\ ,
    \]
    and we say that $(\fH, \fB, \fK)$ is a separating basis for $C$ (w.r.t. $\fA$). 
\end{remark}

\begin{lemma}\label{lem:canonical-chains}
$ \ima(1+\widehat{d}\gamma) = 
\ker(\gamma)= \cH\oplus \cK 
$.
\end{lemma}
\begin{proof}
    First, we have $\ima(1+\widehat{d}\gamma) \subset \ker(\gamma)$
    by the equality $\gamma + \gamma \widehat{d} \gamma=0$ which holds by applying the properties of chain contractions from \cref{eq:contraction_b} to the contraction in \cref{eq:Pgraded_elementary_contraction}.
    Next, the other inclusion $\ker(\gamma) \subset \ima(1+\widehat{d}\gamma)$ follows by $(1+\widehat{d}\gamma)(\ker(\gamma))=\ker(\gamma)$.
    Hence, $\ima(1+\widehat{d}\gamma)=\ker(\gamma)$ holds.

    On the other hand, 
    by the definition of $\gamma$ from \cref{eq:contraction_b}, $\gamma$ restricts to an isomorphism
    $\gamma_{|\B}\colon \B \cong B \xrightarrow{=}K\cong \cK$.
    Hence, it follows that $\cK=\ima(\gamma)$ and $\ker(\gamma) \subset \cH \oplus \cK$.
    In addition,
    the equality $\gamma^2=0$ implies that $\cK=\ima(\gamma) \subset \ker(\gamma)$. Also, $\cH=\ima(\beta)\subset \ker(\gamma)$ by $\gamma \beta=0$. Hence, $\cH \oplus \cK \subset \ker(\gamma)$.
\end{proof}

Next, we recall the contraction in \cref{eq:contraction_conley} which is given by
\begin{equation}\label{eq:contraction-repeated}
    \begin{tikzcd}[ampersand replacement=\&]
    (\oC, \od) \ar[r, "\widetilde{\beta}", shift right=0.3em, swap] \&
    (C, d)
    \arrow[l, "\widetilde{\alpha}", shift right=0.3em, swap]
    \arrow["\widetilde{\gamma}", from=1-2, to=1-2, loop, in=345, out=15, distance=10mm]
\end{tikzcd} 
\end{equation}
where
%, recall that, 
$\cS = \delta(1- \gamma\delta)^{-1}$ and $\od= \alpha \cS \beta$.
Recall that  $\oC_n =  \bigoplus_{p \in P} H^p_n$ for all $n\geq 0$. 
In this context $\alpha = \bigoplus_{p \in P}\jmath_{H^p}$ and $\beta = \bigoplus_{p \in P}\imath_{H^p}$.
Next, we introduce a useful description of the differential $\od$ of the Conley complex that holds when choosing separating bases.

\begin{proposition}\label{prop:canonical-differential}
    Let $(\fH^p, \fB^p, \fK^p)$ are be separating bases for $C^p$ for all $p \in P$.
    Suppose that we have a contraction as in \cref{eq:contraction-repeated}. 
    Then, $\od(z)=\theta\jmath_{\rho(\fH)}d(1 + \gamma\cS)\beta(z)$ for all $z \in \oC$. 
\end{proposition}
\begin{proof}
    To start, since $\cS=\delta(1-\gamma\delta)^{-1}$, we obtain 
    \[
    \cS - \delta\gamma \cS = 
    \delta (1-\gamma\delta)^{-1} - \delta \gamma \delta(1-\gamma\delta)^{-1} =
    \delta ( 1 - \delta \gamma )(1-\gamma\delta)^{-1} = \delta
    \]
    In particular, $\cS = \delta(1 + \gamma\cS)$.
    
    Next, since $\alpha$ is a chain map then $\alpha \widehat{d}=0\alpha = 0$, and it follows that $\alpha d = \alpha (\widehat{d} + \delta) = \alpha \delta$.
    Hence, 
    \begin{equation}\label{eq:pi-M-d-M}
          \od = 
          \alpha\cS\beta=
          \alpha \delta(1 + \gamma\cS) \beta =
          \alpha d(1+\gamma\cS)\beta\ . 
    \end{equation}
    On the other hand, we claim that $\ima(d(1+\gamma\cS)\beta) \subseteq \cH \oplus \cK$. 
    For this, recalling that  $\widehat{d}\beta=\beta0=0$, we obtain
    \[
    d(1+\gamma\cS)\beta=
    (\delta + \widehat{d})(1 + \gamma\cS) \beta = 
    \delta (1+\gamma \cS)\beta + \widehat{d} \gamma \cS \beta = \cS\beta + \widehat{d}\gamma \cS \beta = (1+\widehat{d}\gamma)\cS\beta\ ,
    \]
    which implies the claim by \Cref{lem:canonical-chains}, since $\ima(1+\widehat{d}\gamma)=\cH\oplus \cK$.
    
    To finish, given $z \in \oC$, we write $v = d(1+\gamma\cS)\beta(z)$ and, using \Cref{property:separating-base-second} from \Cref{lem:properties-separating-bases}, it follows that $\jmath_{H^p}(v^p)=\theta^p \jmath_{\rho(\fH^p)}(v^p)$ for all $p \in P$. 
    Altogether, noticing that 
    $\alpha = \jmath_H$ and using \cref{eq:pi-M-d-M}, we obtain
    \[
    \od(z) = \alpha (v) = 
    \jmath_H(v) =
    \theta \jmath_{\rho(\fH)}(v)
    \]
    Thus, restricting the column $v$ to the entries indexed by $\rho(\fH)$ leads to the coordinates of $\od(z)$ in terms of the image $\zeta(\fH)$; which is a basis for $\oC$.
\end{proof}

\begin{remark} 
    A consequence of \Cref{prop:canonical-differential} is that given $z \in H$, the image under the Conley differential $\od(z)$ corresponds to a chain  $d(1+\gamma\cS)\beta(z) \in \cH\oplus \cK$. 
    This construction adapts work from Section~3 in~\cite{Harker2014DiscreteMaps} to the context of $P$-graded complexes. In Definition~3.8. from~\cite{Harker2014DiscreteMaps} pre-boundaries are called \emph{$\cK$-chains} and elements from $\cH \oplus \cK$ are called \emph{canonical chains}. 
\end{remark}

We wish to compute the connection matrix $\Delta$ in the fixed basis $\zeta(\fH)$ for the $P$-graded space $H$. 
To do this, for each $z \in \fH$, we need to compute the coefficients of $\od(z)$ in terms of $\zeta(\fH)$.
The following result summarises this expression. It is an immediate consequence from \Cref{prop:canonical-differential}.
\begin{corollary}\label{cor:entry-connection}
    Let $(\fH^p, \fB^p, \fK^p)$ be a separating basis for $C^p$ for all $p \in P$.
    Denote by $\Delta$ the connection matrix; the matrix associated to $\od$ with respect to the basis $\zeta(\fH)$.
    Given two generators $z_i, z_j \in \zeta(\fH)$, denote by $\Delta_{i,j}$ the entry of $\Delta$ from the row $i$ and the column $j$ (i.e. associated to $z_i$ and $z_j$ respectively). Then, 
    \[
    \Delta_{i,j} = \theta \jmath_{\rho(z_i)}d(1+\gamma\cS)\beta(z_j)\ .
    \]
\end{corollary}

%%%%%%%%%%%%%%%%%%%%%%%%%%%%%%%%%%%%%%%%%%%%%%%%%%%%%%%%%%%%%%%%
\subsection{Connection matrix algorithm}
\label{subsec:algorithm}
%%%%%%%%%%%%%%%%%%%%%%%%%%%%%%%%%%%%%%%%%%%%%%%%%%%%%%%%%%%%%%%%

In this subsection, we present a simple procedure to obtain a connection matrix. 
Recall that $C_n = \bigoplus_{p \in P}\C^p_n$ and suppose that we have fixed bases $\fA^p_n$ for each term $\C^p_n$. 
Putting together these bases we obtain a basis $\fA$ for $C$.
Now, let $D$ be the boundary matrix of $C$ in the basis $\fA$, assuming that columns and rows are sorted in a compatible way with the poset $P$ and dimensions.
That is, we impose a total order in $\fA$ so that, given two generators $\alpha \in \fA^p_n$ and $\beta \in \fA^q_m$ with either $p<q$ or ($p=q$ and $n<m$), then $\alpha < \beta$.

Given $p \in P$ and $n \in \Z$, we denote by $D[p,p,n]$ the restriction of the matrix $D$ to the columns indexed by $\fA^p_n$ and the rows indexed by $\fA^p_{n-1}$.
On the other hand, we denote by $D[:,p]$ the restriction of $D$ to the columns indexed by $\fA^p$ and the rows indexed by $\fA$.
As mentioned above, $D$ denotes the boundary matrix of $C$ in the basis $\fA$. 
In this case $D[p,p,n]$ is the matrix associated to the twisted differential $t^p_n\colon \C^p_n\rightarrow \C^p_{n-1}$. 
while $D[:,p]$ is the matrix associated to the composition $d\circ \imath^p\colon C^p \rightarrow C$.

We perform transformations on $D$ to obtain a connection matrix $\Delta$. 
In order to describe these operations, we refer to columns and rows being labelled by elements from $\fA$, which are the generators of the bases associated to the initial matrix $D$.
In particular, one needs to follow four steps:
\begin{enumerate}[label=(\Roman*)]
    \item\label[step]{algo:relative-reduction} Using the clearing optimisation, reduce the matrices $D[p,p,n]$; this is done by elementary column operations on $D$. 
    Notice that this step can be executed in parallel over $p \in P$. This leads to separating bases $(\fH_n^p, \fB_n^p, \fK_n^p)$ for $\C_n^p$, all $n \in \Z$ and $p \in P$. 
    
    \item\label[step]{algo:optional-removal} (Optional) Remove from $D$ all rows indexed by $\rho(\fK)$ and all columns indexed by $\rho(\fB)$.
    
    \item\label[step]{algo:global-reduction} 
    For each $w \in \fK^p_n$, add the column from $D$ indexed by $\rho(w)$ to other columns sharing the same pivot $\rho(d(w))$, so that the row labelled by the pivot $\rho(d(w))$ has a single non-zero entry (the one for the column indexed by $\rho(w)$). 
    Here, notice that the column indexed by $\rho(w)$ will only be added to columns indexed by generators in $\fA^q$ with $q>p$ (since, at the start of the procedure, $D$ is the matrix of $d$, a $P$-filtered function).
    This operation can be performed following any order, and applying the column operations on $D$.
    
    \item\label[step]{algo:removal} Remove all remaining rows and columns indexed by $\rho(\fK)$ or $\rho(\fB)$.
\end{enumerate}
 
For the reader who wants more details about this procedure, we provide a pseudocode instantiation next.
In our description, the execution of the optional \Cref{algo:optional-removal} is mixed with \Cref{algo:relative-reduction} and we do not discuss parallelisation. The method is summarised in \Cref{alg:connection-matrix}:
\begin{itemize}
    \item \textbf{[lines 1-9]} \Cref{algo:relative-reduction} and \Cref{algo:optional-removal} are executed within the first for loop.
    For each $p \in P$, we start by computing the separating bases $((\fH^p_n, \fB^p_n, \fK^p_n))_{n=0}^N$ as well as the diagonal matrices $(T_n)_{n=0}^N$ by calling \Cref{alg:separating-bases}. After, we consider a matrix $T=\textrm{Diag}((T_n)_{n=0}^N)$ which is the diagonal matrix resulting from putting the matrices $T_i$ for $0 \leq i \leq N$ along the diagonal blocks. 
    Then, \Cref{algo:optional-removal} allows removing a few rows and columns from $D$ and $T$. 
    We end each for loop by replicating the column operations that lead to the separating bases on the column block $D[:,p]$ by simply computing the matrix product $D[:,p]T$.

    \item \textbf{[lines 10-19]} \Cref{algo:global-reduction} is executed within the second for loop. Given $w \in \fK$ and the pivot $\alpha = \rho(w)$, we denote by $\tau$ the pivot from the column $D[:,\alpha]$. 
    Then, this column is added to other columns $D[:,\beta]$ indexed by $\beta \in \fA_{\dim(\alpha)} \setminus \rho (\fB)$ (as we removed the columns $\rho (\fB)$ in \Cref{algo:optional-removal})  such that their pivot is also $\tau$. 
    Since $D$ is the matrix of a $P$-filtered differential, this implies that $\alpha < \beta$. 
    \item \textbf{[lines 20-21]} Finally, \Cref{algo:removal} is executed.
\end{itemize}

\begin{algorithm}%\label{alg:connection-matrix}
\caption{Connection matrix computation (3 steps)}\label{alg:connection-matrix}
\KwData{The matrix $D$ of $P$-filtered differential $d\colon C\rightarrow C$ with respect to a fixed basis $\fA$ (with an order compatible with $P$).}
\KwResult{The connection matrix $\Delta$}
\For{$p \in P$ \label{for:steps-I-II}}{
    $((\fH^p_n, \fB^p_n, \fK^p_n))_{n=0}^N, (T_n)_{n=0}^N \gets \texttt{separating\_via\_clearing}((D[p,p,n])_{n=0}^N)$\; 
    $T \gets \textrm{Diag}((T_n)_{n=0}^N)$\;
    $D \gets \texttt{remove\_rows}(\rho(\fK^p), D)$\;
    $D \gets \texttt{remove\_columns}(\rho(\fB^p), D)$\;
    $T \gets \texttt{remove\_rows}(\rho(\fB^p), T)$\;
    $T \gets \texttt{remove\_columns}(\rho(\fB^p), T)$\;
    $D[:,p] \gets D[:,p] T$\;
}
\For{$w \in \fK$}{
    $\alpha \gets \rho(w)$\;
    $\tau \gets \rho(D[:,\alpha])$\;
    \For{$\beta \in \fA_{\dim(\alpha)}\setminus  \rho(\B)$}{
    \If{$\rho(D[:, \beta])=\tau$}{ 
    $a \gets D[\tau, \beta] / D[\tau, \alpha]$ 
    \tcp*{Conditional execution implies $\alpha < \beta$}
    $D[:,\beta] \gets a \cdot D[:,\alpha]$\;
    }
    }
}
$D\gets \texttt{remove\_columns}(\rho(\fK), D)$\;
$D \gets \texttt{remove\_rows}(\rho(\fB), D)$\;
\Return $D$ 
\end{algorithm}

After executing this procedure, the result is a matrix $\Delta$ which is associated to a map $\oC\rightarrow \oC[-1]$ with respect to a fixed basis $\zeta(\fH)$
 Correctness of this procedure will follow by showing that the entries from $\Delta$ correspond to the description from \Cref{cor:entry-connection}.
 However, let us first consider an example.
 \begin{example}
     In this example, we compute a connection matrix for the complex $X$ from \Cref{ex:filtration}. To start, we perform~\Cref{algo:relative-reduction} on the differential matrix $D$ associated to $d\colon C(X)\rightarrow C(X)[-1]$ depicted on the left below, where one needs to reduce the relative boundaries, marked by blocks.
     The result is the matrix on the right; the only operation performed consists in adding the column labelled by $\overline{vu}$ to the column labelled by $\overline{vw}$. 
     As a result, the column labelled by $\overline{vw}$ stores the boundary of a sum $d(\overline{vu} + \overline{vw})$.
     Next, we remove the rows indexed by elements from $\fK$ and the columns indexed by elements from $\fB$ (we shade these):
     \[
     \left(\begin{array}{c|ccccccc}
       &
     u & 
     w &
     \overline{uw} &
     v &
     \overline{vu} &
     \overline{vw} &
     \overline{uvw} 
     \\ \hline
     u & 
     \tikznode{m-0-0}{0} & 0 & 1 & 0 & 1 & 0 & 0 \\
     w & 
     0 & \tikznode{m-1-1}{0} & 1 & 0 & 0 & 1 & 0 \\
     \overline{uw} & 
     0 & 0 & \tikznode{m-2-2}{0} & 0 & 0 & 0 & 1 \\
     v & 
     0 & 0 & 0 & \tikznode{m-3-3}{0} & 1 & 1 & 0 \\
     \overline{vu} & 
     0 & 0 & 0 & 0 & 0 & 0 & 1 \\
     \overline{vw} & 
     0 & 0 & 0 & 0 & 0 & \tikznode{m-5-5}{0} & 1 \\
     \overline{uvw} & 
     0 & 0 & 0 & 0 & 0 & 0 & \tikznode{m-6-6}{0} 
     \end{array}
     \right) 
     %%%%%%%%%%%%%%%%%%%%%%%%%%%%%%%%%%%%%%%%%%%
     \rightarrow
     %%%%%%%%%%%%%%%%%%%%%%%%%%%%%%%%%%%%%%%%%%%
     \left(\begin{array}{c|c>{\columncolor{gray!20}}cc>{\columncolor{gray!20}}cccc}
       &
     u & 
     w & 
    \overline{uw} &
     v &
     \overline{vu} &
     \overline{vw} &
     \overline{uvw} 
     \\ \hline
     u & 
     \tikznode{m2-0-0}{0} & 0 & 1 & 0 & 1 & 1 & 0 \\
     w & 
     0 & \tikznode{m2-1-1}{0} & 1 & 0 & 0 & 1 & 0 \\
     \rowcolor{gray!20} \overline{uw} & 
     0 & 0 & \tikznode{m2-2-2}{0} & 0 & 0 & 0 & 1 \\
     v & 
     0 & 0 & 0 & \tikznode{m2-3-3}{0} & 1 & 0 & 0 \\
     \rowcolor{gray!20} \overline{vu} & 
     0 & 0 & 0 & 0 & 0 & 0 & 1 \\
     \overline{vw} & 
     0 & 0 & 0 & 0 & 0 & \tikznode{m2-5-5}{0} & 1 \\
     \overline{uvw} & 
     0 & 0 & 0 & 0 & 0 & 0 & \tikznode{m2-6-6}{0} 
     \end{array}
     \right) 
     \tikz[overlay, remember picture]{%
        \node[thick, box around=(m-0-0)(m-0-0)];
        \node[thick, box around=(m-1-1)(m-2-2)];
        \node[thick, box around=(m-3-3)(m-5-5)];
        \node[thick, box around=(m-6-6)(m-6-6)];
        }
    \tikz[overlay, remember picture]{%
        \node[thick, box around=(m2-0-0)(m2-0-0)];
        \node[thick, box around=(m2-1-1)(m2-2-2)];
        \node[thick, box around=(m2-3-3)(m2-5-5)];
        \node[thick, box around=(m2-6-6)(m2-6-6)];
        }
     \]
    Additionally, we obtain the following decomposition of relative chains:
    \begin{align*}
        \C^0 = & \langle u\rangle = \cH^0_0, 
        &
        \C^1 = & \langle w \rangle \oplus \langle \overline{uw} \rangle = \B^1_0 \oplus \cK^1_1, \\
        \C^2 = & \langle v \rangle \oplus \langle \overline{vu} \rangle \oplus \langle \overline{vu}+\overline{vw} \rangle = \B^2_0 \oplus \cK^2_1 \oplus \cH^2_1, 
        &
        \C^3 = & \langle \overline{uvw} \rangle = \cH^3_2
    \end{align*}
    On the left in the picture below, we show the result after performing \Cref{algo:global-reduction}, which, in this case, consists in adding the column labelled by $\overline{uw}$ to the column labelled by $\overline{vw}$. 
    In this case, the shaded rows must have only a single non-zero entry.
    On the right, we show the result after performing \Cref{algo:removal}, removing the shaded rows and columns; this leads to a connection matrix:
    \[
     \left(\begin{array}{c|c>{\columncolor{gray!20}}c>{\columncolor{gray!20}}ccccc}
       &
     u & 
     \overline{uw} &
     \overline{vu} &
     \overline{vw} &
     \overline{uvw} 
     \\ \hline
     u & 
     0 & 1 & 1 & 0 & 0 \\
     \rowcolor{gray!20} w & 
     0 & 1 & 0 & 0 & 0 \\
     \rowcolor{gray!20} v & 
     0 & 0 & 1 & 0 & 0 \\
     \overline{vw} & 
     0 & 0 & 0 & 0 & 1 \\
     \overline{uvw} & 
     0 & 0 & 0 & 0 & 0 
     \end{array}
     \right) 
     %%%%%%%%%%%%%%%%%%%%%%%%%%%%%%%%%%%%%%%%%%%
     \rightarrow
     %%%%%%%%%%%%%%%%%%%%%%%%%%%%%%%%%%%%%%%%%%%
     \left(\begin{array}{ccc}
        0 & 0 & 0 \\
        0 & 0 & 1 \\
        0 & 0 & 0
     \end{array}
     \right)= \Delta.
     \]
     Also, since $\fH = \fH_0^0 \sqcup \fH^2_1 \sqcup \fH^3_2$ with $\fH_0^0=\{ u\}$, $\fH^2_1=\{\overline{vu}+\overline{vw}\}$ and $\fH^3_2=\{\overline{uvw}\}$, we have that $\zeta(\fH)=\{ u + B_0^0, \overline{vu}+\overline{vw}, \overline{uvw}\}$ is a basis for $\oC=H^0_0 \oplus H^2_1\oplus H^3_2$. Altogether, recall that, the connection matrix $\Delta$ is the matrix associated to the differential $\od\colon \oC\rightarrow \oC_{-1}$ with respect to this basis $\zeta(\fH)$.
 \end{example}
\begin{proposition}\label{prop:connection-matrix-algo}
    The output
    $\Delta$ of \Cref{alg:connection-matrix} is the connection matrix 
    representing $\od$ with respect to the basis $\zeta(\fH)$, which consists of cycle bases of relative homology groups $H^p_n$.
\end{proposition}
\begin{proof}
    To start, denote by $\widetilde{D}$ the resulting matrix after performing \Cref{algo:relative-reduction} and \Cref{algo:global-reduction}, without having executed \Cref{algo:optional-removal}, from the procedure.
    Given $z \in \fH^p_n$, consider the column from $\widetilde{D}$ indexed by $\rho(z)$, which is a chain $v \in C_{n-1}$. 
    Notice that, by construction, it follows that
    \[
    v = d\bigg(z + \sum_{q < p} \sum_{w \in \fK^q_n} wc_w\bigg)
    \]
    for suitable coefficients $c_w \in k$ for all $w \in \fK^q_n$ and all $q < p$. Indeed, only columns indexed by $\rho(w)$ with $w\in \fK^q_n$ can have been added to the column indexed by $\rho(z)$, and, since $\rho(d(w)) \in \fA^q \subset \C^q$, it must be that $q<p$.

    Next, $v\in C_{n-1}=\bigoplus_{q \in P} \C^q_{n-1}$ can be written as $v=\sum_{q \in P} v^q$ for unique chains $v^p\in \C^p_{n-1}$ for all $p \in P$.
    In addition, by the description of \Cref{algo:global-reduction} from the algorithm, it follows that $\jmath_{\rho(\fB)}(v)=0$.
    Now, since, by \Cref{algo:relative-reduction}, $(\fH^q_{n-1}, \fB^q_{n-1}, \fK^q_{n-1})$ is a separating basis, using \Cref{property:separating-base-first} from \Cref{lem:properties-separating-bases}, it follows that $\jmath_B(v)=0$ for all $q \in P$.
    This implies that $v \in \cH \oplus \cK$ and, by \Cref{lem:canonical-chains}, also $\gamma(v)=0$.
    Another consequence, by \Cref{lem:properties-separating-bases} \Cref{property:separating-base-second} is that $\jmath_H(v)=\theta \jmath_{\rho(\fH)}(v)$ and all coordinates from $v$ which are in $\fA \setminus \rho(\fH)$ can be ignored.
    Hence, \Cref{algo:optional-removal} can be executed without affecting the end result since the column operations performed during \Cref{algo:global-reduction} remain the same.
    Notice that the columns optionally removed in \Cref{algo:optional-removal} are indexed by $\rho(\fB)$, so these are never added to other columns. 
    Thus, removing them has no effect on the final result, after executing \Cref{algo:global-reduction} and \Cref{algo:removal}. 
    On the other hand, the rows optionally removed in \Cref{algo:optional-removal} are $\rho(\fK)$ and do not contain relative pivots (i.e. $\rho(\fB)$), so their removal also has no effect on the final result. 
    These optional removals reduce the unnecessary operations when executing \Cref{algo:global-reduction}.

    Next, let us write $y=\sum_{q < p}\sum_{w \in \fK^q_n} wc_w$.
    We claim that $y=\gamma\cS(z)$. 
    If this claim is true, then we have shown the correctness of the algorithm, as $v$ is a chain representing $d(1 + \gamma \cS)\beta(z)$ and $\zeta(z) \in \zeta(\fH)$. The last \Cref{algo:removal} returns the coordinates of $\theta \jmath_{\rho(\fH)}(v)$ with respect to $\zeta(\fH)$; these are the coordinates of the connection matrix by \Cref{cor:entry-connection}.
    
    Let us now prove the claim. 
    In particular, we will show the equivalent statement that $\jmath^s(y) = \jmath^s \gamma\cS(z)$ for all $s \in P$, where $\jmath^s\colon C \twoheadrightarrow \C^s$ denotes the projection map.
    To start, $\jmath^s(y)=0$ for all $s\geq p$ and in this case the claim trivially holds since $\jmath^s\gamma\cS(z)=0$.
    Let us now consider $s \in P$ such that $s < p$ and there does not exist $r \in P$ such that $s<r<p$. 
    In this case, using that $t^{sq}\neq 0$ implies $s\leq q$, it follows that
    \[
    \jmath^s(v) = \jmath^s d(z + y) = 
    t^{sp}(z) + \sum_{q<p} t^{sq}\jmath^q(y) = 
    t^{sp}(z) + t^s\jmath^s(y)\ .
    \]
    Now, since $\gamma\jmath^s(v) = \jmath^s\gamma(v)=0$, and since $y \in \cK$, we deduce that
    $
    \jmath^s(y)=-\gamma t^s\jmath^s(y)= \gamma t^{sp}(z) = \jmath^s \gamma \delta(z).
    $
    Furthermore, using again our hypotheses on $s$, it follows that $\jmath^s(\gamma \delta)^m(z)=0$ for all $m>1$, and so
    $\jmath^s(y) = \jmath^s \gamma \delta(z) = \jmath^s \gamma\cS(z)$.
    We now proceed inductively.
    
    By induction, let $s < p$ and suppose that, for all $q \in P$ such that $s<q<p$, the equality $\jmath^q(y)=\jmath^q \gamma\cS(z)$ holds. 
    In such case
    \begin{multline}\label{eq:induction-step-vs}
    \jmath^s(v) = \jmath^s d(z + y) =
    t^{sp}(z) + t^s\jmath^s(y) + \sum_{s<q<p}t^{sq}\jmath^q(y) \\
    = t^{sp}(z) + t^s\jmath^s(y) + \sum_{s<q<p}t^{sq}\jmath^q \gamma\cS(z) 
    = t^{sp}(z) + t^s\jmath^s(y) + \jmath^s \delta \gamma\cS(z),
    \end{multline}
    where, for the last equality, we have used that $\jmath^s \delta = \sum_{s<q}t^{sq} \jmath^q$ together with the fact that $\jmath^\ell \gamma\cS(z) =0$ for all $\ell \geq p$.
    On the other hand, in order, using $\gamma \jmath^s(v)=0$ together with \cref{eq:induction-step-vs}, $t^{sp}(z)=\jmath^s \delta(z)$, $\gamma\jmath^s=\jmath^s\gamma$, and $\cS=\delta(1 + \gamma\cS)$, it follows
    \begin{multline*}
    \jmath^s(y) = 
    - \gamma t^s\jmath^s(y) = 
    \gamma t^{sp}(z) + \gamma \jmath^s \delta\gamma\cS(z) = 
    \gamma \jmath^s \delta(z) +  \gamma \jmath^s \delta \gamma \cS(z)  =   
    \jmath^s \gamma \delta(1 + \gamma \cS)(z) = \jmath^s\gamma \cS(z).
    \end{multline*}
    This finishes the inductive proof (since $P$ is well-founded) of the claim $y=\gamma \cS(z)$.
\end{proof}

\begin{remark}
    The method presented in this section is very similar to the procedure to obtain the connection matrix from~\cite{Dey2024ComputingReductions, Dey2025ComputingDecomposition}. 
    In particular, after performing \Cref{algo:relative-reduction}, relative boundaries are the same as \emph{homogeneous columns}.
    However, our method differs from those references, since it first computes relative homology groups via the clearing optimisation in \Cref{algo:relative-reduction}. In contrast, the algorithm in~\cite{Dey2024ComputingReductions, Dey2025ComputingDecomposition} proceeds in a single for-loop ranging over the columns of the matrix $D$.
    %considers straight away a single for-loop ranging over the columns from the matrix $D$.
    Our preprocessing of relative homology can be parallelized and provides further computational advantages. 
    Namely, one can execute \Cref{algo:optional-removal}, which deletes rows and columns from $D$, before executing \Cref{algo:global-reduction}, which adds columns indexed by generators at different poset grades.
\end{remark}

%%%%%%%%%%%%%%%%%%%%%%%%%%%%%%%%%%%%%%%%%%%%%%%%%%%%%%%%%%%%%%%%
\subsection*{Algorithmic complexity}
%%%%%%%%%%%%%%%%%%%%%%%%%%%%%%%%%%%%%%%%%%%%%%%%%%%%%%%%%%%%%%%%
The method presented in this section has an algorithmic complexity bounded above by $N^3$, where $N=\#\fA$, obtaining a similar bound to the computation of ordinary persistent homology as done in~\cite{Dey2024ComputingReductions,Dey2025ComputingDecomposition}. 
We give a derivation below. 
Recall that $\fA^p$ denote the bases for $\C^p$ for all $p \in P$, which compose a basis $\fA$ for $C$.
Let $M = \max\{\#\fA^p\}$. 
\Cref{algo:relative-reduction} takes a complexity of $NM^2$ times the size of the poset $\#P$; computing separating bases has a complexity of about $M^3$ time and replicating the same reductions on the block $D[:,p]$ is then bounded by $NM^2$. 
On the other hand, let $K=\#\fB=\#\fK$ be the total number of columns being relative boundaries (or preboundaries) only.
Assuming that the optional \Cref{algo:optional-removal} has been executed, \Cref{algo:global-reduction} takes about $K(N-K)^2$ time, since the $K$ relative preboundary columns might be added to the $N-K$ columns that are not boundaries, whose length is also $N-K$. 
Notice that executing \Cref{algo:optional-removal} and \Cref{algo:removal} has a negligible cost,  compared to the previous two.
Thus, the total complexity is of $NM^2(\#P) + K(N-K)^2$. In this case, $K$ is only known after executing \Cref{algo:relative-reduction}, while the other two $M$ and $N$ are known from the start. 
As mentioned earlier, one could parallelize the computation of \Cref{algo:relative-reduction}, which would divide $\#P$ by the number of available processors.

%%%%%%%%%%%%%%%%%%%%%%%%%%%%%%%%%%%%%%%%%%%%%%%%%%%%%%%%%%%%%%%%
\section{Conclusion and Future Work}
\label{sec:conclusion}
%%%%%%%%%%%%%%%%%%%%%%%%%%%%%%%%%%%%%%%%%%%%%%%%%%%%%%%%%%%%%%%%

Our definition of the connection matrix is based on the notion of Conley complexes from \cite[Def.~4.23]{harker2021}. We have shown how to combine algebraic Morse theory, in particular algebraic Morse matchings (see~\cref{def:Morse-matching}), with a homological perturbation lemma (\cref{lemma:perturbation}) to obtain the chain contraction of the Conley complex (\cref{thm:main}). 
The upshot of this analysis is an explicit description of the Conley complex differential, stated in \Cref{prop:canonical-differential}, and of the coefficients in the connection matrix, stated in \Cref{cor:entry-connection}. 
In \cref{subsec:splittings-clearing}, we then employed the clearing optimisations from \cite{Chen2011,Bauer2014ClearChunks} to find a splitting of the chain group $C_n$ into a sum of Morse chains, boundaries and pre-boundaries. Conveniently, this procedure leads to a separating basis for the chain group $C_n$ from a Gaussian reduction of the matrix describing the chain complex differential. 
By adding suitable columns labeled by pre-boundaries to the columns of $D$ we then modified $D$ to obtain the connection matrix by following the algorithm outlined in \cref{subsec:algorithm}.

The description of the connection matrix in terms of the infinite sum %over zig-zag paths 
is intriguing: each summand adds homological information from deeper and deeper layers of the poset. A similar approximation of homology can be found in spectral sequences and the spectral systems from~\cite{Matschke22}. 
By regarding a $P$-graded chain complex $(C,d)$ as an $\mathsf{O}(P)$-filtered chain complex (see~\Cref{rmk:lattice}),~\cite{spendlove2025graded} showed that one can obtain a spectral system via its Cartan-Eilenberg system, involving relative homology groups of $\mathsf{O}(P)$-filtered subcomplexes of $(C,d)$. 
Furthermore, by Theorem~5.1. from \cite{spendlove2025graded} such Cartain-Eilenberg systems are invariant under $O(P)$-filtered chain isomorphisms, so a question would be if these systems can be explicitly computed using connection matrices.
As future work, we intend to investigate the computational relationships between the homological perturbation theory presented here and the successive derivations in the spectral systems of the Cartan-Eilenberg system of $(C,d)$. 

Our algorithm relies on homology decompositions, which is a convenience due to only considering chain complexes over vector spaces with field coefficients. Algebraic Morse theory, however, works with coefficients in arbitrary $R$-modules for a commutative ring $R$. Therefore another interesting direction of research would be to adapt the description of the connection matrix and ultimately also the algorithm to this setting. 

Since the connection matrix is dependent on a choice of basis, we can also consider how that choice can be made in light of different applications, especially in dynamical systems, where the Morse indices can be interpreted as homological indices of isolated invariant sets, and the connection matrix as a lower bound on the connecting orbits between them. 

%%%%%%%%%%%%%%%%%%%%%%%%%%%%%%%%%%%%%%%%%%%%%%%%%%%%%%%%%%%%%%%%
\subsection*{Acknowledgements}
%%%%%%%%%%%%%%%%%%%%%%%%%%%%%%%%%%%%%%%%%%%%%%%%%%%%%%%%%%%%%%%%

Álvaro Torras-Casas was funded by the French Agence Nationale de la Recherche through the project
reference ANR-22-CPJ1-0047-01. Ka Man Yim's work on this project was supported by a UKRI Future Leaders Fellowship [grant number MR/W01176X/1; PI: J. Harvey]. The authors are grateful to the Welsh government Taith travel grant which enabled this collaboration. 

%%%%%%%%%%%%%%%%%%%%%%%%%%%%%%%%%%%%%%%%%%%%%%%%%%%%%%%%%%%%%%%%
\printbibliography

\end{document}